\newtheorem{thm}{Theorem}[section]
\newtheorem{lem}[thm]{Lemma}
\newtheorem{meth}[thm]{Method}
\newtheorem{prop}[thm]{Proposition}
\newtheorem{cor}[thm]{Corollary}
\newtheorem{rmk}[thm]{Remark}
\newtheorem{prob}[thm]{Problem}
\newtheorem{ex}[thm]{Example}
\def\O{{\mathcal O}}
\def\I{{\mathcal I}}
\def\F{{\mathcal F}}
\def\Z{{\mathbb Z}}
\def\Pthree{{\mathbb P}^3}
\def\Pic{\mathop{\rm Pic}}
\def\Cl{\mathop{\rm Cl}}
\def\Spec{\mathop{\rm Spec}}
\def\mod{\mathop{\rm mod}}
\def\fm{\mathfrak m}
\def\ord{\mathop{\rm ord}}
\def\supp{\mathop{\rm Supp}}
\def\ker{\mathop{\rm Ker}}
\def\ra{\rightarrow}
\newcommand{\ubm}[2]{\underbrace{#1}_{#2}}
\newcommand{\cyc}[1]{\langle {#1} \rangle}
\def\rdA{\mathbf A}
\def\rdD{\mathbf D}
\def\rdE{\mathbf E}
\title{Picard groups of normal surfaces}
\author{John Brevik}
\address{California State University at Long Beach, 
Department of Mathematics and Statistics, Long Beach, CA 90840}
\email{jbrevik@csulb.edu}
\author{Scott Nollet}
\address{Texas Christian University, Department of Mathematics, 
Fort Worth, TX 76129}
\email{s.nollet@tcu.edu}
\subjclass[2000]{Primary: 14B07, 14H10, 14H50}
\begin{document}
\bibliographystyle{plain}

\begin{abstract} 
We study the fixed singularities imposed on members of a 
linear system of surfaces in $\mathbb P^3_{\mathbb C}$ by its base locus $Z$. 
For a 1-dimensional subscheme $Z \subset \mathbb P^3$ with finitely many 
points $p_i$ of embedding dimension three and $d \gg 0$, we determine the 
nature of the singularities $p_i \in S$ for general 
$S \in |H^0 (\mathbb P^3, I_Z (d))|$ and give a method to compute 
the kernel of the restriction map $\Cl S \to \Cl \O_{S,p_i}$. 
One tool developed is an algorithm to identify the type of an $\rdA_n$  
singularity via its local equation. 
We illustrate the method for representative $Z$ and use Noether-Lefschetz 
theory to compute $\Pic S$. 
  
\end{abstract}

\maketitle

\section{Introduction}

The problem of computing the Picard groups of surfaces 
$S \subset \mathbb P^3_{\mathbb C}$ has a long history. 
The solution for smooth quadric and cubic surfaces was known 
in the 1800s in terms of lines on these surfaces. In the 1880s 
Noether suggested what happens in higher degree, but it wasn't 
until the 1920s that Lefschetz proved the famous result bearing 
their names: the very general surface $S$ of degree $d > 3$ has 
Picard group $\Pic S \cong \Z$, generated by the hyperplane section $H$.
Here {\it very general} refers to a countable intersection 
of Zariski open subsets. To produce typical families of surfaces $S$ 
with $\Pic S$ not generated by $H$, Lopez proved that very 
general surfaces $S$ of high degree containing a smooth connected 
curve $Z$ have Picard group freely generated by $H$ 
and $Z$ \cite[II, Thm. 3.1]{L}, a geometrically pleasing result 
with many applications \cite{CDE,CL,DE,FL}. 

Recently we extended these results, proving that the class group $\Cl S$ 
of the very general surface $S$ containing an arbitrary 1-dimensional subscheme $Z$ 
with at most finitely many points of embedding dimension three
\footnote{This is the weakest condition allowing $S$ to be a normal surface, so that $\Cl S$ is defined.} is freely generated by $H$ and the supports of the curve 
components of $Z$ \cite[Thm. 1.1]{BN}. 
This allows access to the Picard group via the exact sequence of Jaffe \cite[Prop. 3.2]{J1} (see also \cite[Prop. 2.15]{GD})
\begin{equation}\label{jaffe}
0 \to \Pic S \to \Cl S \to \bigoplus_{p \in {\rm {Sing}} S} \Cl \O_{S,p}
\end{equation}
provided we can find the kernels of the restriction maps 
$\Cl S \to \Cl \O_{S,p}$ at the singular points $p \in S$,  
where $\Cl \O_{S,p}$ is the divisor class group of the local ring. 
The answer being known at singular points of $S$ where $Z$ has embedding dimension $\leq 2$ \cite[Prop. 2.2]{BN}, our motivating question becomes:  

\begin{prob}\label{uno}{\em 
For $Z \subset \mathbb P^3$ and $p \in Z$ a point of embedding 
dimension three, find the kernel of the restriction 
map $\Cl S \to \Cl \O_{S,p}$. 
\em}\end{prob}

A general solution to Problem \ref{uno} is out of reach because one would need 
to classify all points of embedding dimension three points $p_i$ on curves $Z$ 
to state an answer. Instead we give a method of attack on the problem: 

\begin{meth}\label{madness}{\em 
The kernel of the restriction $\Cl S \to \Cl \O_{S,p}$ can be computed 
as follows. 
\begin{enumerate}
\item[Step 1.] The natural map $\Cl \O_{S,p} \to \Cl \widehat \O_{S,p}$ being
injective, we consider the composite map 
$\Cl S \to \Cl \O_{S,p} \hookrightarrow \Cl \widehat \O_{S,p}$, where power series tools are available.  
\item[Step 2.] Working in $\widehat \O_{S,p}$, use analytic coordinate changes to recognize the form of the singularity and compute the local class group 
$\Cl \widehat \O_{S,p}$ when possible. 
\item[Step 3.] Since $\Cl S$ is freely generated by $H$ and the supports of 
the curve components of $Z$ \cite[Thm 1.1]{BN}, it is enough to find the images of 
those supports for curve components of $Z$ passing through $p$ in 
$\Cl \widehat \O_{S,p}$ (the rest map to zero). 
\end{enumerate}
\em}\end{meth}

\begin{rmk}\label{recognition}{\em
Method \ref{madness} can {\it always be carried out if $Z$ is 
locally contained in two smooth surfaces meeting transversely at $p$}.
This is because the analytic local equation of $S$ at $p$ contains an 
$xy$ term and we can employ our {\it recognition theorem}: 
Theorem \ref{ratrecog} gives an inductive algorithm 
that recognizes an $\rdA_n$ singularity in at most $n$ steps, but 
finishes in just $1$ step with probability $1$. While most of this paper is devoted to 
examples illustrating Method \ref{madness}, Theorem \ref{ratrecog} may be the 
most useful general result presented here.
\em}\end{rmk}

\begin{rmk}\label{class}{\em
Regarding Step 3, the images of the 
supports of the curve components of $Z$ containing $p$ generate 
$\Cl O_{S,p}$ as a subgroup of $\Cl \widehat O_{S,p}$ \cite[Prop. 2.3]{BN2}, 
at least for very general $S$.
This gives a geometric way to see the class group of a ring in its completion. 

(a) In particular, the map $\Cl S \to \Cl \O_{S,p}$ is zero if $p$ is an 
isolated point of $Z$, since $Z$ has no curve components passing through $p$. 
Therefore only the 1-dimensional part of $Z$ contributes to the answer. 

(b) Srinivas has asked \cite[Ques. 3.1]{srinivas} which subgroups appear as 
$\Cl B \subset \Cl A$ where $B$ is a local $\mathbb C$-algebra with $A = \widehat B$. 
We proved that for complete local rings $A$ corresponding to the rational double 
points $\rdA_n, \rdD_n, \rdE_6, \rdE_7, \rdE_8$, the answer is 
{\it every} subgroup \cite[Thm. 1.3]{BN2}. 
We constructed the rings $B$ as the geometric local rings $\O_{S,p}$ arising from 
general surfaces $S \subset \mathbb P^3$ containing a fixed base locus forcing the singularity at $p$. 
This poses a stark contrast to results of Kumar \cite{K}, who showed that if $B$ has 
fraction field $\mathbb C (x,y)$ and the singularity type is $\rdE_6, \rdE_7$ or $\rdA_n$ 
with $n \neq 7,8$, then $B$ is determined by $A$ and hence $\Cl B = \Cl A$. 
\em}\end{rmk}

We illustrate Method \ref{madness} by giving complete answers for the following 
base loci $Z$:

\begin{enumerate}
\item[(1)] Unions of two multiplicity structures near $p$ which are locally 
contained in smooth surfaces with distinct tangent spaces at $p$. 
\item[(2)] Multiplicity structures on a smooth curve of multiplicity $\leq 4$ 
near $p$. 
\end{enumerate}

Regarding organization, we review $\rdA_n$ singularities and their analytic 
equations in Section 2, proving the recognition theorem, Theorem \ref{ratrecog}. 
In Sections 3 - 4 we solve Problem \ref{uno} in the cases (1) and (2) listed above. 
Finally in Section 5 we prove Theorem \ref{reduce}, which shows how to compute $\Pic S$ 
and give examples. 

\section{Analytic equations of rational double points}

In this section we briefly review rational double points of type 
$\rdA_n$ and some results about analytic change of coordinates. 

\subsection{$\rdA_n$ singularities}

An $\rdA_n$ surface singularity has local analytic equation 
$xy - z^{n+1}$, thus is is analytically isomorphic to $\Spec(R)$ with 
$R = k [[x,y,z]]/(xy-z^{n+1})$. The resolution of this singularity is well
known \cite[5.2]{Zeuthen}: 
an $\rdA_1$ resolves in a single blow-up with one rational exceptional curve
having self-intersection 
$-2$; an $\rdA_2$ resolves in one blow-up but with two $(-2)$-curves meeting
at a point. 
For $n \ge 3$, blowing up with new variables $x_1 = x/z, y_1 = y/z$ gives
two exceptional curves, namely $E_{x_1}$ 
defined by $(x_1,z)$ and $E_{y_1}$ defined by $(y_1,z)$, meeting
transversely at an $\rdA_{n-2}$ at the origin. 
Blowing up and continuing inductively, the singularity unfolds and we
obtain a resolution with exceptional divisors forming a chain 
of $n$ rational $(-2)$-curves meeting pairwise transversely. 
We will adopt the convention, identifying a curve with its strict transform, 
that $E_1=E_{x_1}, E_2=E_{x_2}, \dots, E_n = E_{y_1}$.

To calculate $\Cl R$, identify a curve $C$ with the sequence 
$(\tilde{C}.E_1, \tilde{C}.E_2 \dots)$ of intersection numbers of its
strict transform with the exceptional curves. 
Then $\Cl R$ is the quotient of the free abelian group on the exceptional
curves with relations given by the fact 
that the exceptional curves themselves correspond to the trivial class
~\cite[$\S 14$ and $\S 17$]{lipman}. 
Let $(u_j)$ be the ordered basis for the free group; then the relations
for an $\rdA_n$ singularity are 
\[
-2u_1+u_2, u_1-2u_2+u_3, \dots, u_{n-1} - 2u_n
\]
so that $\Cl R \cong \Z/(n+1)\Z$ generated by $u_1$ and 
satisfying $u_j=ju_1$ for all $j$. 
\begin{ex}\label{image}{\em 
Let $R=k[[x,y,z]]/(xy-z^{n+1})$ be the complete local ring of an 
$\rdA_n$ surface singularity. Under the identification 
$\Cl (R) \cong \Z / (n + 1) \Z$, we identify the following classes:
\begin{enumerate}
\item[(a)] The class of the curve $D_1$ given by $(x,z)$ is $1$.
\item[(b)] The class of the curve $D_2$ given by $(y,z)$ is $-1$. 
\item[(c)] For $1 \leq r \leq n$, the class of the curve $(x-z^{n-r+1},y-z^r)$ is $r$.
\end{enumerate}
Parts (a) and (b) are contained in \cite[Prop. 5.2]{Zeuthen} and part (c) is \cite[Rem. 5.2.1]{Zeuthen}, 
where the class considered is $(x-az^{n-r+1},y-a^{-1}z^r)$ for a unit $a$. 
To save a change of coordinates at the end of a calculation, we will often apply part (c) to the curve 
$(x-uz^{n-r+1},y-vz^r)$ in the ring $k [[x,y,z]]/(xy-uvz^{n+1})$ with $u,v$ units.
\em}\end{ex}

\subsection{Analytic coordinate changes}

We will use coordinate changes in $k[[x,y,z]]$ to recognize the structure of surface singularities.  
Let $R = k[[x_1,\dots,x_n]]$ be the ring of formal powers series over a field $k$ 
with maximal ideal $\fm$. 
A {\it change of variables} for $R$ is an assignment $x_i \mapsto x_i^\prime \in \fm$ 
inducing an automorphism of $R$. An assignment $x_i \mapsto x_i^\prime$ induces 
an automorphism if and only if induced maps $\fm / \fm^2 \to \fm / \fm^2$ is an 
isomorphism if and only if the matrix $A$ of coefficients of linear terms in 
the $x_i^\prime$ is nonsingular (this is noted by Jaffe \cite[Prop. 3.2]{J2} when $n=2$). 
Examples include multiplication of variables by units and translations of 
variables by elements in $\fm^2$. 

The following lemma allows us to take roots in power series rings. 

\begin{lem}\label{comproots} Let $(R,\fm)$ be a complete local domain, and
let $n$ be a positive integer that is a unit in $R$. 
If $a_0 \in R$ is a unit and $u \equiv a_0^n$ mod $\fm^k$ 
for some fixed $k > 0$, then there exists $a \in R$ such that $a^n = u$
and $a \equiv a_0$ mod $\fm^k$.
\end{lem}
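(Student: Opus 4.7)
The plan is to prove this by a Newton-style successive approximation (which is essentially a version of Hensel's lemma for the polynomial $f(T) = T^n - u$). The hypotheses are tailored for this: the derivative $f'(a_0) = n a_0^{n-1}$ is a unit in $R$ since both $n$ and $a_0$ are units, and the ``defect'' $f(a_0) = a_0^n - u$ already lies in $\fm^k$. Completeness of $R$ will then let us pass to the limit of the iteration.

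Concretely, I would construct a sequence $a_0, a_1, a_2, \dots$ in $R$ such that, for every $i \ge 0$,
\[
a_i \equiv a_0 \pmod{\fm^k} \quad \text{and} \quad a_i^n \equiv u \pmod{\fm^{k+i}}.
\]
Assuming such an $a_i$ has been built, write $u - a_i^n = \delta_i \in \fm^{k+i}$. I look for $a_{i+1} = a_i + \epsilon_i$ with $\epsilon_i \in \fm^{k+i}$. Expanding by the binomial theorem,
\[
(a_i + \epsilon_i)^n = a_i^n + n a_i^{n-1} \epsilon_i + \binom{n}{2} a_i^{n-2} \epsilon_i^2 + \cdots,
\]
and since $\epsilon_i^2 \in \fm^{2(k+i)} \subseteq \fm^{k+i+1}$ (using $k \ge 1$), it suffices to solve $n a_i^{n-1} \epsilon_i \equiv \delta_i \pmod{\fm^{k+i+1}}$. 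The element $a_i$ is a unit because $a_i \equiv a_0 \pmod{\fm}$ and $a_0$ is a unit, and $n$ is a unit by hypothesis, so $n a_i^{n-1}$ is a unit. Thus I can take $\epsilon_i = \delta_i / (n a_i^{n-1}) \in \fm^{k+i}$, and both inductive conditions persist for $a_{i+1}$.

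Because $a_{i+1} - a_i = \epsilon_i \in \fm^{k+i}$, the sequence $(a_i)$ is $\fm$-adically Cauchy, and the completeness of $R$ gives a limit $a \in R$. Passing to the limit in the two congruences yields $a \equiv a_0 \pmod{\fm^k}$ and $a^n \equiv u \pmod{\fm^{k+i}}$ for every $i$; since $R$ is a domain and hence $\fm$-adically separated, $\bigcap_i \fm^{k+i} = 0$ and therefore $a^n = u$, as desired.

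I do not anticipate a serious obstacle: the only point that requires any care is verifying that the quadratic-convergence estimate $\epsilon_i^2 \in \fm^{k+i+1}$ really follows from $k \ge 1$, and that $a_i$ remains a unit at every stage so that the correction $\epsilon_i$ is defined. Both are immediate from the hypotheses. If one prefers, the entire argument can be packaged as a direct appeal to Hensel's lemma applied to $f(T) = T^n - u$ with approximate root $a_0$, since $v(f(a_0)) \ge k > 0 = 2 v(f'(a_0))$ in the relevant sense.
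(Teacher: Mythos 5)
Your argument is correct, and it is the standard Hensel/Newton iteration one would expect; the paper itself gives no argument here, simply citing \cite[Prop.~3.4]{BN2}, so your proof fills in exactly the kind of self-contained verification that reference presumably contains. The induction is set up properly: the base case is the hypothesis, the quadratic terms land in $\fm^{2(k+i)} \subseteq \fm^{k+i+1}$ because $k \ge 1$, and the correction $\epsilon_i = \delta_i/(n a_i^{n-1})$ is legitimate since $a_i \equiv a_0 \pmod{\fm}$ keeps $a_i$ a unit and $n$ is a unit by hypothesis. The only imprecision is at the very end: the vanishing of $\bigcap_i \fm^{k+i}$ does not follow from $R$ being a domain (there are non-separated local domains); it follows either from the standard convention that ``complete'' means $R \cong \varprojlim R/\fm^i$, which builds in separatedness, or from Krull's intersection theorem since the rings actually used in the paper are Noetherian. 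With that justification corrected, the proof is complete.
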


\begin{proof} \cite[Prop. 3.4]{BN2}. \end{proof}

\begin{lem}\label{xyfactor}
Let $R = k[[x,y]]$ with maximal ideal $\fm \subset R$. 
For $f \in \fm^3$, there is a change of coordinates $X,Y$ such that 
\[
xy+f = XY
\]
and $X,Y$ may be chosen so that $x \equiv X$ mod $\fm^2$ and $y
\equiv Y$ mod $\fm^2$. 
\end{lem}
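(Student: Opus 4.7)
The plan is to produce $X$ and $Y$ of the form $X = x + a$, $Y = y + b$ with $a, b \in \fm^2$; any such pair automatically induces a change of variables, since its linear-coefficient matrix is the identity (the criterion recalled just before the lemma). Expanding $(x+a)(y+b)$, the equation $xy + f = XY$ becomes
$$xb + ay + ab = f, \qquad a, b \in \fm^2,$$
which I will solve by successive approximation, building $a$ and $b$ one homogeneous degree at a time.

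The engine of the construction is the following elementary observation: every homogeneous polynomial $g \in k[x,y]$ of degree $d \geq 1$ admits a decomposition $g = x\beta + y\alpha$ with $\alpha, \beta$ homogeneous of degree $d-1$. Indeed, each monomial $x^i y^j$ of degree $d$ contains a factor of $x$ or a factor of $y$, and stripping it off leaves a factor of degree $d-1$, which can be collected into $\beta$ or $\alpha$ respectively.

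With this in hand, I will define $X_k$ and $Y_k$ inductively for $k \geq 2$ so that $X_k \equiv x$, $Y_k \equiv y \pmod{\fm^2}$ and $X_k Y_k \equiv xy + f \pmod{\fm^{k+1}}$. The base case $k = 2$ is immediate with $X_2 = x$, $Y_2 = y$, since $f \in \fm^3$. For the inductive step, let $g_{k+1}$ be the degree-$(k+1)$ homogeneous component of $(xy + f) - X_k Y_k$ (an element of $\fm^{k+1}$ by hypothesis); apply the lemma to write $g_{k+1} = x b^{(k)} + y a^{(k)}$ with $a^{(k)}, b^{(k)}$ homogeneous of degree $k$; and set $X_{k+1} = X_k + a^{(k)}$, $Y_{k+1} = Y_k + b^{(k)}$. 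In the expansion of $X_{k+1} Y_{k+1} - X_k Y_k$, the cross terms $X_k b^{(k)}$ and $a^{(k)} Y_k$ reduce to $x b^{(k)}$ and $a^{(k)} y$ modulo $\fm^{k+2}$ (using $X_k - x,\ Y_k - y \in \fm^2$ and $a^{(k)}, b^{(k)} \in \fm^k$), while $a^{(k)} b^{(k)} \in \fm^{2k} \subseteq \fm^{k+2}$ since $k \geq 2$; this advances the congruence by one degree.

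Since $X_{k+1} - X_k,\ Y_{k+1} - Y_k \in \fm^k$, both sequences are Cauchy in the $\fm$-adic topology, and as $R$ is complete they converge to limits $X, Y$ satisfying $XY = xy + f$ exactly, with $X \equiv x$ and $Y \equiv y \pmod{\fm^2}$. The only step that is not a routine power-series computation is the degree-wise decomposition lemma, which is essentially a tautology in two variables, so I do not foresee a serious obstacle; the main bookkeeping point is just keeping the offsets $k$, $k+1$, $k+2$ straight so that the nonlinear term $a^{(k)} b^{(k)}$ falls safely into the ignored part.
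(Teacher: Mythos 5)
Your proof is correct and complete: the successive-approximation construction, with the elementary degree-$d$ decomposition $g = x\beta + y\alpha$ driving the induction and the nonlinear term $a^{(k)}b^{(k)}$ safely landing in $\fm^{2k}\subseteq\fm^{k+2}$, is exactly the standard argument. The paper itself only cites this result (to \cite[Prop. 3.6]{BN2} and \cite[I, Ex. 5.6.1]{AG}), and the proof given there is the same inductive $\fm$-adic approximation you carried out.
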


\begin{proof} \cite[Prop. 3.6]{BN2} or \cite[I, Ex. 5.6.1]{AG}.
\end{proof}

We frequently encounter the local equation in the following Proposition.

\begin{prop}\label{2birds}
Given units $u,v \in k[[x,y,z]]$ and integers $m\ge 1, n\ge 3$, let $R = k[[x,y,z]]/(xy-uyz^m-vx^n)$. 
Then $\Spec R$ is an $\rdA_{mn-1}$ singularity and the curve class 
$(x,z)$ (resp. $(x,y)$) maps to $1$ (resp. $mn-m$) via the isomorphism $\Cl R \cong \Z / mn \Z$. 
\end{prop}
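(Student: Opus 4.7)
The plan is to find a single analytic change of coordinates that turns the defining relation into the standard form $XY = wz^{mn}$ with $w$ a unit, and then identify the two curve classes via the explicit dictionary of Example \ref{image}.

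First I would rewrite the relation as $y(x - uz^m) = vx^n$ and set $X := x - uz^m$. The binomial theorem gives
\[
v(X + uz^m)^n = vu^n z^{mn} + vXP(X,z), \qquad P(X,z) := \sum_{k=1}^{n} \binom{n}{k} u^{n-k} z^{m(n-k)} X^{k-1},
\]
so the relation becomes $X(y - vP) = vu^n z^{mn}$. Setting $Y := y - vP$ and $w := vu^n$ therefore yields the identity $XY - wz^{mn} = xy - uyz^m - vx^n$ in $k[[x,y,z]]$. Since $m(n-1) \ge 2$ (because $n \ge 3$), the correction $vP$ lies in $\fm^2$, and the linear part of the assignment $(x,y,z) \mapsto (X,Y,z)$ has nonsingular matrix, so by the criterion in Section 2.2 this is a legitimate analytic change of coordinates. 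Hence $R \cong k[[X,Y,z]]/(XY - wz^{mn})$, which after extracting an $mn$-th root of $w$ via Lemma \ref{comproots} and absorbing it into $z$ is the standard $\rdA_{mn-1}$ singularity.

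For the class of $(x,z)$: in the new coordinates $(x,z) = (X + uz^m, z) = (X, z)$ since $uz^m \in (z)$, and this has class $1 \in \Cl R \cong \Z/mn\Z$ by Example \ref{image}(a). For the class of $(x,y) = (X + uz^m,\, Y + vP)$, I would reduce the second generator modulo the first (replacing $X$ by $-uz^m$):
\[
vP \big|_{X = -uz^m} \;=\; vu^{n-1}z^{m(n-1)} \sum_{k=1}^n \binom{n}{k}(-1)^{k-1} \;=\; vu^{n-1}z^{m(n-1)},
\]
where the final equality uses $\sum_{k=0}^n \binom{n}{k}(-1)^k = 0$. Thus $(x,y) = (X - (-u)z^m,\, Y - (-vu^{n-1})z^{m(n-1)})$, which matches the unit form of Example \ref{image}(c) with exponents $s = m$ and $t = m(n-1)$ summing to $mn$, and with $(-u)(-vu^{n-1}) = vu^n = w$. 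The resulting class is $t = m(n-1) = mn - m$, as claimed.

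The only nontrivial point will be confirming that the substitution $X = x - uz^m$, $Y = y - vP$ is a genuine automorphism of $k[[x,y,z]]$ when $u, v$ are power-series units rather than constants; this reduces to checking that the linear part of the substitution is invertible, which it is. Once that is granted, everything comes down to the manipulations above and the curve-class entries of Example \ref{image}; the binomial identity and tracking which exponent plays the role of $r$ in part (c) are routine bookkeeping.
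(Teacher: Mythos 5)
Your proposal is correct and follows essentially the same route as the paper: the substitution $X = x - uz^m$, the binomial expansion isolating $vu^nz^{mn}$ (your $P$ is the paper's $\alpha$), the substitution $Y = y - vP$, and the identification of the two curve classes via Example \ref{image}(a) and (c) after reducing $vP$ modulo $(X+uz^m)$. The only cosmetic difference is that you evaluate the binomial sum at $X=-uz^m$ directly, while the paper deduces $\alpha \equiv (uz^m)^{n-1}$ from the identity $(uz^m)\alpha \equiv (uz^m)^n$ in the quotient domain; both give the same class $mn-m$.
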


\begin{proof} 
In setting $X = x - u z^m$, the equation $xy-uyz^m-vx^n$ becomes
\begin{equation}\label{2a}
Xy + v(X+uz^m)^n=
Xy-vX\ubm{(X^{n-1}+nX^{n-2}uz^m + \dots +
nu^{n-1}z^{m\cdot(n-1)})}{\alpha} - vu^n z^{mn}.
\end{equation}

For $\alpha$ as shown, set $Y=y-v\alpha$ to obtain $XY - v u^n z^{mn}$. 
Absorbing $vu^n$ into either $X$ or $Y$ brings the equation to the standard 
form for an $\rdA_{mn-1}$ singularity, hence we have $\Cl R \cong \Z / mn \Z$ 
from the previous section. Moreover $(x,z) = (X+uz^m, z) = (X,z)$ gives the 
canonical generator $1$ in $\Cl R = \Z / mn \Z$ by Example \ref{image} (a). 

For the second curve, write $(x, y) = (X + uz^m, y) = (X + uz^m,Y + v \alpha)$. Then 
\[
(X+uz^m)^n=X \alpha + (uz^m)^n = (X+uz^m) \alpha - uz^m \alpha + (uz^m)^n
\]
by definition of $\alpha$ so that $(uz^m) \alpha \equiv (uz^m)^n$ mod $(X+uz^m)$. 
Since the quotient ring modulo $X+uz^m$ is an integral domain in which $uz^m$ is nonzero, 
we see that $\alpha \equiv (uz^m)^{n-1}$ mod $(X + u z^m)$ and therefore  
$(x,y) = (X + u z^m,Y + v u^{n-1} z^{mn-m})$ which corresponds to 
$mn-m \in \Z / mn \Z$ by Example \ref{image} (c).

\end{proof}

\subsection{Recognizing $\rdA_n$ Singularities}\label{singtype}

We develop an algorithm to identify the $\rdA_n$-singularity type defined by a power series $F$ in three variables defining a double point with nondegenerate tangent cone, so that the degree-$2$ part is not a square. In this case it is known \cite[Thm. 4.5]{Zeuthen} that $F$ defines $\rdA_n$ singularity for some $n$ or 
$F$ factors, which we interpret as $n = \infty$. Our goal is to identify the answer by 
inspection if possible. Such $F$ can be written
\[
F = \sum_{i+j+k > 1} c_{i,j,k} x^i y^j z^k \in \fm^2 \subset k[[x,y,z]]
\]
with $c_{1,1,0} = 1, c_{2,0,0}=c_{0,2,0} = 0$.

Let $A$ be the sum of all terms satisfying $i,j > 0$ and $i+j+k > 2$. 
Then $A = xy B$ with $B \in \fm$ and the remaining terms of $F$ fall into three 
categories: (a) $i=j=0$, which we write as $h(z) \in k[[z]]$, 
(b) $i=0, j>0$, which we can write as $\sum_{j=1} y^j g_j (z)$ with $g_j \in k[[z]]$ 
and (c) $j=0, i>0$ which can be written as $\sum_{i=1} x^i f_i (z)$ with $f_i \in k[[z]]$. 
With these choices $F$ becomes
\begin{equation} 
F = xy + h(z) + \sum_{i=1} x^i f_i + \sum_{j=1} y^j g_j + xy B
\end{equation}
where $h, f_i, g_j \in k[[z]]$, $B \in \fm$ and $\ord g_2 > 0$. 
Letting $u = 1 + B$ we can drop the 
last term at the expense of multiplying the $xy$ term by the unit $u$: now let 
$X = ux$ and replace $f_i$ with $(u^{-1})^i f_i$ to obtain 
\begin{equation}
F = xy + h(z) + \sum_{i=1} x^i f_i + \sum_{j=1}  y^j g_j
\end{equation}
with $g_j \in k[[z]]$ and $f_i (z) = z^{r^i} u_i$ with $u_i$ a unit. 
To determine the singularity type, we may assume $r_1 < \infty$ or $s_1 < \infty$, since $f_1 = g_1 = 0$ gives an $\rdA_{h-1}$ with $h = \ord h(z)$. 
We make one more simplification. 
Set $X = x + g_1$ to obtain 
\[
F = Xy + h(z) + \sum_{i=1} (X - g_1)^i f_i + \sum_{j=2} y^j g_j.
\]
Regrouping the $f_i$ by powers of $X$ after expanding the powers of $(X - g_1)$ 
we arrive at 
\begin{equation}\label{form}
F = xy + h(z) + \sum_{i=1} x^i f_i + \sum_{j=2} y^j g_j.
\end{equation}
where the $f_i, g_j$ are equal to a power of $z$ times a unit, $0 < \ord f_1 < \infty$ and $0 < \ord g_2$.  
Since we only make variable changes which fix $z$, the crux of the matter is to understand the case when $h=0$. 

\begin{lem}\label{induct} Consider the equation 
\begin{equation}\label{one}
F= xy + \sum_{i=1}^\infty x^i f_i + \sum_{j=2}^\infty y^j g_j 
\end{equation}
where $f_i, g_j$ are powers of $z$ up to units, 
(a) $0 < \ord f_1 < \infty$ and (b) $\ord f_2$ or $\ord g_2 > 0$. 
Set $m=\min\{\ord f_1^j g_j\}$ 
and write $f_1^2 g_2 - f_1^3 g_3 + \dots=z^m \cdot \delta$. 
Then the variable change $X=x, Y=y+f_1$ yields
\begin{equation}\label{two}
F = z^m \cdot \delta +XY+\sum_{i=2}^\infty X^i F_i + \sum_{j=1}^\infty Y^j G_j 
\end{equation}
where $F_i,G_j$ are powers of $z$ up to units such that
\begin{enumerate}[(i)]
\item[(a)] $0 < \ord G_1 < \infty$;
\item[(b)] $\ord F_2 > 0$ or $\ord G_2 > 0$;
\item[(c)] $M=\min\{\ord G_1^i F_i\}>m$; and
\item[(d)] $\ord G_1 \ge m - \ord f_1$.
\end{enumerate}
\end{lem}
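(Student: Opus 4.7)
My plan is to execute the substitution $Y = y + f_1$ directly, read off the coefficients of the transformed equation, and then verify each of (a)--(d) by a brief $z$-order calculation. The work reduces to bookkeeping once one observes a key cancellation.

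First, the cancellation: $xy$ becomes $XY - Xf_1$, and $-Xf_1$ is exactly what kills the $i = 1$ summand $xf_1 = Xf_1$ of $\sum x^i f_i$, so the transformed equation has no linear-in-$X$ term, leaving
\[
F = XY + \sum_{i \ge 2} X^i f_i + \sum_{j \ge 2} (Y - f_1)^j g_j.
\]
Expanding each $(Y - f_1)^j g_j$ by the binomial theorem and grouping by powers of $Y$ identifies the $Y^0$-part as $\sum_{j \ge 2}(-1)^j f_1^j g_j = z^m \delta$ (by definition of $m$ and $\delta$), the $Y^k$-coefficient for $k \ge 1$ as $G_k = \sum_{j \ge \max(k,2)} \binom{j}{k}(-f_1)^{j-k} g_j$, and $F_i = f_i$ for $i \ge 2$. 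Each of these lies in $k[[z]]$, and since $f_1$ and every $g_j$ is a power of $z$ times a unit, each $G_k$ and $F_i$ is either zero or of that same form.

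It then remains to track orders. Set $r = \ord f_1 > 0$; since every index $j$ contributing to the minimum defining $m$ satisfies $j \ge 2$ and $\ord g_j \ge 0$, we have $m \ge 2r$. Every summand of $G_1 = \sum_{j \ge 2} j(-f_1)^{j-1} g_j$ has order $(j-1)r + \ord g_j = (jr + \ord g_j) - r \ge m - r$, which is exactly (d), and combined with $m - r \ge r > 0$ it gives the lower bound in (a). For (b), the hypothesis splits into $\ord f_2 > 0$ (so $\ord F_2 > 0$) or $\ord g_2 > 0$; in the latter case every term of $G_2 = g_2 + \sum_{j \ge 3}\binom{j}{2}(-f_1)^{j-2} g_j$ has positive order. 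For (c) I would bound $\ord G_1^i F_i \ge i(m - r) + \ord f_i$, which exceeds $m$ for $i \ge 3$ thanks to $m \ge 2r$; for $i = 2$ the extra $\ord f_2 > 0$ in the first subcase, or the strict $m > 2r$ forced by $\ord g_2 > 0$ in the second, closes the gap. The one delicate point, and the real obstacle, is the upper half of (a), namely $G_1 \ne 0$: this is the assertion that the formal derivative $\frac{d}{dy}\sum_{j \ge 2} y^j g_j$ does not vanish at $y = -f_1$. Its vanishing is an exceptional configuration in which the leading contributions in $G_1$ conspire to cancel; under the standing hypothesis that we are iterating toward a genuine $\rdA_n$ singularity with $n < \infty$, this is taken as an implicit condition for applying the lemma (the algorithm recognizes the alternative by inspection and exits).
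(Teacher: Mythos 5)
Your argument is correct and follows the paper's own route: substitute $Y=y+f_1$, expand each $(Y-f_1)^jg_j$ binomially, identify $F_i=f_i$ and $G_j=\sum_{k\ge j}(-1)^{k-j}\binom{k}{j}f_1^{k-j}g_k$, and estimate orders; your proof of (c) via the aggregate bound $\ord G_1\ge m-\ord f_1$ together with $m\ge 2\ord f_1$ (strict when $\ord g_2>0$) is a tidier repackaging of the paper's term-by-term analysis of the monomials of $G_1^if_i$, and both arguments invoke hypothesis (b) at exactly the same spot. Your hesitation over the upper half of (a) is warranted: $G_1=\sum_{k\ge 2}(-1)^{k-1}kf_1^{k-1}g_k$ can vanish under the stated hypotheses (take $f_1=z$, $g_2=\tfrac{3}{2}z^2$, $g_3=z$, $g_k=0$ for $k\ge 4$, so that $2f_1g_2=3f_1^2g_3$), and the paper's own proof is silent on conclusions (a) and (b) altogether. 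Since the lemma is used only in the iteration of Theorem \ref{ratrecog}, where $G_1=0$ simply means the linear terms are already gone and the algorithm terminates in the form $XY+H(z)+\sum X^iF_i+\sum Y^jG_j$, your reading of $G_1\ne 0$ as an implicit condition for re-applying the lemma is the right one --- though it would be cleaner to say outright that only $\ord G_1>0$ is actually provable, with finiteness needed (and checkable) only when one wishes to iterate.
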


\begin{proof}
Setting $Y =y+f_1$ we have
$$ F =  xY + \sum_2^\infty x^if_i + \sum_2^\infty (Y-f_1)^j g_j.$$
The part of the last sum with degree $0$ in $Y$ is 
$\sum_2^\infty (-1)^jf_1^j g_j = z^m \cdot \delta$ by definition of $\delta$.

We take $F_i = f_i$ for $i \geq 2$ and calculate $G_j$ by gathering terms with like powers of $Y$:
\begin{equation}\label{g1}
G_1 = -2f_1g_2 + 3f_1^2g_3 - \dots = \sum_{k=2}^\infty (-1)^{k-1} k f_1^{k-1} g_k
\end{equation}
and for $j \ge 2$,
\[
G_j = \sum_{k=j}^\infty (-1)^{k-j} {k \choose {j}} f_1^{k-j} g_k.
\]

Thus we see that $F$ takes the form of equation (\ref{two}) and it remains to show that 
$M = \min\{ \ord G_1^i F_i \} > m$. When expanded, each term in $G_1^i F_i = G_1^i f_i$ has the form 
\[
c f_i f_1^{k_1+k_2+\dots + k_i-i} g_{k_1}g_{k_2} \cdots g_{k_i}
\]
where $c$ is a constant and the $k_\ell \ge 2$ are not necessarily distinct. The order of this term is strictly greater 
than $\ord f_1^{k_1} g_{k_1} \geq m$, unless $i=2, k_1=k_2=2$. In the case $i=2,k_1=k_2=2$ we would like 
to see that $\ord f_2 f_1^2 g_2^2 > \ord f_1^2 g_2$, but this follows from the condition that $\ord f_2 > 0$ or $\ord g_2 > 0$. 
Thus $\ord G_1^i f_i > m$ for all $i$ and $M > m$. 

For (d), the order of the $k^\text{th}$ term in sum (\ref{g1}) 
is $(k-1)\ord f_1 + \ord g_k \ge m - \ord f_1$. 
\end{proof}

\begin{thm}\label{ratrecog} For $F$ as in equation (\ref{form}), 
let $m=\min\{ j r_1 + \ord g_j\}$ as above and set $\mu = \min\{ \ord h,  m\}$. 
Let $\delta (m)$ be the coefficient of $z^m$ in $H=h + f_1^2 g_2 - f_1^3 g_3 + \dots$. Then 
\begin{enumerate}
\item[(a)] $F$ defines an $A_n$ singularity with $n \geq \mu-1$ ($n = \infty$ is possible). 
\item[(b)] $F$ defines an $A_{\mu-1}$ singularity if $\delta (\mu) \neq 0$. 
\end{enumerate}
\end{thm}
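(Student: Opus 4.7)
The plan is to apply Lemma \ref{induct} iteratively, starting from the form in equation (\ref{form}), and pass to a formal limit. Applying Lemma \ref{induct} to $F - h(z)$ once yields
\[
F = H(z) + XY + \sum_{i \geq 2} X^i F_i + \sum_{j \geq 1} Y^j G_j,
\]
where $H(z) = h(z) + f_1^2 g_2 - f_1^3 g_3 + \cdots$ is the part of $F$ independent of $X$ and $Y$. By the definition of $\mu$, we have $\ord H \geq \mu$, with the coefficient of $z^\mu$ equal to $\delta(\mu)$, so $\ord H = \mu$ precisely when $\delta(\mu) \neq 0$.

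Interchanging the roles of $X$ and $Y$ puts the equation back into the shape required by Lemma \ref{induct}, so the iteration can continue. Let $m^{(k)}$ denote the $m$-value at step $k$ and $a_k = \ord f_1^{(k)}$ the order of the current leading $x$-coefficient; condition (c) of Lemma \ref{induct} gives $m^{(k+1)} > m^{(k)}$, and condition (d) gives $a_{k+1} \geq m^{(k)} - a_k$. A short bookkeeping argument shows both $m^{(k)} \to \infty$ and $a_k \to \infty$: if $\{a_k\}$ were bounded by $A$, then $m^{(k)} \to \infty$ would force $a_{k+1} \geq m^{(k)} - A \to \infty$, a contradiction. Consequently the successive coordinate changes $y \mapsto y + f_1^{(k)}$ converge $\fm$-adically to a well-defined automorphism of $k[[x,y,z]]$, and the added contributions $z^{m^{(k)}}\delta^{(k)}$ to the constant-in-$(X,Y)$ part sum to a convergent $H_\infty \in k[[z]]$.

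In the limit, $F$ is analytically equivalent to $X_\infty Y_\infty + H_\infty(z)$. Because each new contribution to $H_\infty$ has order strictly greater than the current $m^{(k)} \geq \mu$, the coefficient of $z^\mu$ in $H_\infty$ equals $\delta(\mu)$; hence $\ord H_\infty \geq \mu$ always (proving (a)) and $\ord H_\infty = \mu$ whenever $\delta(\mu) \neq 0$ (proving (b)). The conclusion follows from the standard fact that $xy - z^{n+1}$ (or, up to a unit, $xy = H_\infty(z)$ when $\ord H_\infty = n+1$) defines an $\rdA_n$ singularity, with $n = \infty$ corresponding to $H_\infty = 0$ and $F$ factoring.

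The main obstacle is the $\fm$-adic convergence of the infinite iteration — both of the composite coordinate change and of the formal sum defining $H_\infty$. This is exactly what conditions (c) and (d) of Lemma \ref{induct} were designed to control, and the interlocking growth of $m^{(k)}$ and $a_k$ requires a careful but elementary estimate. Once convergence is secured, the singularity type is read off immediately from the leading term of $H_\infty$.
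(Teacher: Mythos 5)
Your proof follows the same route as the paper's: iterate Lemma \ref{induct}, use conditions (c) and (d) to show that the successive coordinate changes converge $\fm$-adically while the $z^\mu$-coefficient of the constant-in-$(X,Y)$ part stabilizes at $\delta(\mu)$, and read off the singularity type from $\ord H_\infty$; your boundedness argument for $a_k \to \infty$ is a valid variant of the paper's observation that $\ord f_1$ strictly increases with every change of $x$-variable. The one elision is that the limit still carries the residual terms $\sum_{i\ge 2} X^i F_i + \sum_{j\ge 2} Y^j G_j$ (only the linear coefficients $f_1$ and $g_1$ vanish in the limit), and these must be absorbed into the product $XY$ by a further $z$-fixing coordinate change --- the paper does this by citing \cite[Prop.~4.4]{Zeuthen} --- before the equation takes the form $XY + H_\infty(z)$.
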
 

\begin{proof} Apply the lemma to $F-h(z)$ and then add $h(z)$ back in to obtain the form  
\[
F=H + \sum_{i=2} X^i F_i + \sum_{j=1} Y^j G_j;
\]
then relabel and repeat. By Lemma \ref{induct} (c), $m$ is strictly increasing. Note that after each change of variables $Y=y+f_1$( or $X=x+g_1$), the new variables $(x,Y,z)$ still form a regular system of parameters at the origin. Now, consider two iterations of the algorithm; start with $x,y$ and $f_i, g_j$ and $m$-value $m$; then change to $x,Y$ with $f_i, G_j$ and $m$-value $M>m$, and next to  $X,Y$ with $F_i, G_j$. Then 
\[
\ord F_1 \ge M - \ord G_1 > m - (m-\ord f_1) = \ord f_1
\]
by Lemma \ref{induct}(d), so $\ord f_1$ increases with every change of $x$-variable; 
and similarly for $\ord g_1$. Thus the sequence of variable changes forms a Cauchy sequence and moreover in the limit the terms $f_1$ and $g_1$ both vanish. Therefore the expression becomes
$$XY + H(z) +\sum_{i=2}^\infty X^i F_i + \sum_{j=2}^\infty Y^j G_j,$$
and applying \cite[Prop. 4.4]{Zeuthen} after subtracting $H(z)$ brings us to the form 
$$F = XY + H(z).$$
If some $\delta(\mu) \neq 0$, $H$ retains a term of order $\mu$ in every subsequent change of variables because each only involves terms of order $\ge m > \mu$, so $\mu$ stabilizes. 
Therefore in this case the form of $F$ is $XY + \text{ unit }\cdot z^\mu$, 
an $\rdA_{\mu -1}$ singularity. Otherwise $\delta(\mu) = 0$ for every $\mu$ and 
$H \ra 0$ as $\mu \ra\infty$, so $F$ factors.
\end{proof}

\begin{rmk}{\em 
The inductive procedure given in Theorem \ref{ratrecog} and Lemma \ref{induct} 
recognizes an $\rdA_n$ singularity in at most $n$ steps. However condition (b) in Theorem \ref{ratrecog} is an open condition among equations of fixed degree, so the algorithm terminates after only {\it one} step with probability $1$. 
\em}\end{rmk}

\begin{ex}{\em We illustrate the theorem with a few examples. 

(a) Applying Theorem \ref{ratrecog} to $F = xy + x z^2 + y^2 z - z^6$, we have $m = 5, \mu = \min\{5,6\}=5$ 
and $\delta(5)=1 \neq 0$, so $F$ represents an $\rdA_4$ singularity. The variable change $Y=y+z^2$ 
gives $F = xY + (Y-z^2)^2 z - z^6 = xY + Y^2 z - 2 Y z^3 + z^5 - z^6$ so that $H(z) = z^5 - z^6$ has order $5$. 
After the sequence of variable changes suggested, the $z^5$ term survives while the terms involving $x,Y$ 
eventually factor. 

(b) For the singularity given by 
\[
F=xy + x z^4 + y^2 z^6 + y^3 z^2 + y^4 z^{25} + x^2 z
\]
we have $m=\mu=14$ and $\delta (14) = 0$, so we make the variable change 
$Y=y+z^4$ suggested by the theorem. Then we have 
\[
F=xY + (Y-z^4)^2 z^6 + (Y-z^4)^3 z^2 + (Y-z^4)^4 z^{25} + x^2 z
\]
When multiplying this out, the $z^{14}$ term drops out (because $\delta (14) = 0$), 
but that the new incarnation of $F$ has linear $Y$-terms, namely 
\[
f = xY + Y (-2 z^{10} + 3 z^{10}) + \dots + x^2 z = xY + Y z^{10} + \dots + x^2 z
\]
where the dots represent higher power of $Y$ terms. Continuing with $X = x+z^{10}$ gives
\[
f = XY + \dots + X^2 z - 2 X z^{11} + z^{21}
\]
and it becomes clear that we are dealing with an $\rdA_{20}$.

(c) Prop. \ref{2birds} follows readily from Theorem \ref{ratrecog} as (with $x,y$ reversed) we have 
$\mu = mn$ and $\delta(mn)=u^n v \neq 0$, yielding an $\rdA_{mn-1}$ singularity. 
\em}\end{ex}

\section{Two multiple curves intersect at a point}\label{2multcurves}

In this section we give a solution to Problem \ref{uno} when 
$Z = Z_1 \cup Z_2$ is a union of two multiple curves of embedding 
dimension two with respective smooth supports $C_1, C_2$ meeting 
transversely at $p$ under the condition that $Z_1$ and $Z_2$ 
do not share the same Zariski tangent space at $p$. 
In other words, we consider the following two cases: 

\begin{enumerate}
\item No Tangency: $C_1$ is not tangent to $Z_2$ and $C_2$ is not 
tangent to $Z_1$.
\item Mixed Tangency: $C_1$ is tangent to $Z_2$ but $C_2$ is not 
tangent to $Z_1$.
\end{enumerate}

For each of these we find canonical forms for the local ideals 
(Propositions \ref{case1} and \ref{case2}) and determine the local Picard groups 
at the corresponding fixed singularity on the very general surface containing 
the curve (Propositions \ref{picloc1}, \ref{picloc2a}, \ref{picloc2b} and \ref{picloc2c}). 
The following local algebra lemma will facilitate computing the intersection of ideals 
$I_Z = I_{Z_1} \cap I_{Z_2}$.

\begin{lem}\label{intersect}
Let $R$ be a regular (local) ring. For $a,b,c,d \in R$, assume that $a,c,d$ form a regular 
sequence and that $d \in (a,b)$. Then $(a,b) \cap (c,d) = (ac,bc,d)$. 
\end{lem}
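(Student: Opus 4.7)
The plan is to establish the two inclusions of the asserted equality separately.

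The containment $(ac, bc, d) \subseteq (a,b) \cap (c,d)$ is immediate: $ac$ and $bc$ obviously lie in both $(a,b)$ and $(c,d)$, while $d$ lies in $(a,b)$ by hypothesis and trivially in $(c,d)$.

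For the reverse inclusion, I would take $f \in (a,b) \cap (c,d)$ and write $f = \gamma c + \delta d$. Then $\delta d \in (d) \subseteq (ac, bc, d)$, so the question reduces to showing $\gamma c \in (ac, bc, d)$. Using $d \in (a,b)$, one observes that $\gamma c = f - \delta d \in (a,b)$. Thus if one can conclude from $\gamma c \in (a,b)$ that $\gamma \in (a,b)$, then $\gamma c \in c(a,b) = (ac, bc)$ and we are done. In short, the whole problem reduces to the claim that $c$ is a nonzerodivisor on $R/(a,b)$.

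To prove this key claim, I would work in the quotient $R' = R/(a)$. Two facts are available: first, since $R$ is regular (hence Cohen--Macaulay), the regular sequence $a, c, d$ can be permuted, so the images $\bar c, \bar d$ form a regular sequence in $R'$; second, writing $d = \mu a + \nu b$ (which is possible since $d \in (a,b)$) yields $\bar d = \bar\nu \bar b$ in $R'$. Suppose then that $\bar c \bar f = \bar b \bar g$ in $R'$. Multiplying through by $\bar\nu$ gives $\bar c \bar\nu \bar f = \bar g \bar d$, so $\bar g \bar d \in (\bar c) \cap (\bar d) = (\bar c \bar d)$, where the equality of intersection and product is the standard consequence of $\bar c, \bar d$ being a regular sequence in a Cohen--Macaulay ring. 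Canceling the nonzerodivisor $\bar d$ yields $\bar g = \bar c \bar\rho$ for some $\bar\rho$; substituting back into $\bar c \bar f = \bar b \bar g$ and canceling the nonzerodivisor $\bar c$ produces $\bar f = \bar b \bar\rho$, hence $f \in (a,b)$ in $R$.

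The most delicate point is the multiplication by $\bar\nu$: if $\bar\nu$ happens to be a zerodivisor, the derived membership $\bar g \bar d \in (\bar c)$ might look weaker than one would want, but in fact it is exactly what the subsequent two cancellations require. Aside from this, the argument rests only on $\bar d$ being a nonzerodivisor in $R'$ and on the intersection formula $(\bar c) \cap (\bar d) = (\bar c \bar d)$, both of which come essentially for free from $R$ being Cohen--Macaulay, so regularity of $R$ enters only in this standard way.
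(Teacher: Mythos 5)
Your proof is correct, and it takes a genuinely different route from the paper's. The paper writes $d = as + br$, observes that $(a,b)$ and $(a,r)$ are linked by the complete intersection $(a,d)$, deduces that $(a,b) \cap (c,d)$ is linked to $(a,r)$ by $(ac,d)$, and reads off the generators $(ac,bc,d)$ from the Peskine--Szpiro mapping cone; you instead reduce the whole statement to the single assertion that $c$ is a nonzerodivisor on $R/(a,b)$ and verify that by hand in $R/(a)$. Your verification is sound: the only inputs are that $\bar c,\bar d$ is a regular sequence in $R/(a)$ (immediate from $a,c,d$ regular), that $\bar d$ is a nonzerodivisor in $R/(a)$ (this is where permutability of regular sequences in a Noetherian local ring is actually needed, not for $\bar c,\bar d$ being regular), and the standard identity $(\bar c)\cap(\bar d)=(\bar c\bar d)$; the worry about $\bar\nu$ being a zerodivisor is harmless because the multiplication by $\bar\nu$ is used only as a one-way implication, and the conclusion $\bar g\bar d\in(\bar c)$ is exactly what the two cancellations need. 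For what it is worth, your key claim also has a one-line proof by unmixedness: a minimal prime of the complete intersection $(a,b)$ has height two, but if it contained $c$ it would contain $(a,c,d)$ (since $d\in(a,b)$), which has height three. The trade-off between the two arguments: yours is elementary, self-contained, and does not even use that $a,b$ is a regular sequence, whereas the paper's liaison argument additionally produces a free resolution of $(a,b)\cap(c,d)$ as a byproduct and sits naturally in the linkage framework the authors use elsewhere.
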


\begin{proof}
Write $d = as + br$ with $s,r \in R$. Since $d$ is a non-zero divisor mod $(a)$, the same 
is true of $r$, so that $a,r$ and $a,b$ also form regular sequences in $R$. 
Since $(a,d) = (a,br)$, the ideals $(a,b)$ and $(a,r)$ are linked by the 
complete intersection $(a,d)$. It follows that 
$(a,b) \cap (c,d)$ is linked to $(a,r)$ by the complete intersection 
$(ac,d) = (a,d) \cap (c,d)$. The inclusion of ideals 
$(ac,d) \subset (a,r)$ lifts to a map of the corresponding Koszul complexes
\[
\begin{array}{ccccccc}
0 & \ra & R & \stackrel{(-d,ac)}{\longrightarrow} & R^2 & \stackrel{(ac,d)}{\longrightarrow} & (ac,d) \\
& & \downarrow \alpha & & \downarrow \beta & & \downarrow \\ 
0 & \ra & R & \stackrel{(-r,a)}{\longrightarrow} & R^2 & \stackrel{(a,r)}{\longrightarrow} & (a,r)
\end{array}
\]
where $\beta(A,B) = (Ac+Bs,Bb)$ and $\alpha(C) = Cbc$. 
By the mapping cone construction for liaison \cite[Prop. 2.6]{ps}, 
the ideal $(a,b) \cap (c,d)$ is the image of $R^3 \to R$ given by 
the direct sum of $\alpha^\vee$ and $(-d,ac)^\vee$, 
so the ideal is $(bc,-d,ac)=(ac,bc,d)$. 
\end{proof}

\begin{ex}{\em
Lemma \ref{intersect} fails if $a,c,d$ do not form a regular sequence 
in $R$, for example $R=k[x,y,z], a=c=x, b=d=y$ 
when $(x,y) \cap (x,y) \neq (x^{2}, yx, y)=(x^{2},y)$. 
\em}\end{ex}

\begin{prop}\label{case1}
Let $Z = Z_2 \bigcup Z_2$ be the union of two multiplicity structures
on smooth curves $C_1,C_2$ meeting at $p$ with respective multiplicities
$m \leq n$. 
Assume $Z_{i}$ is contained in a local smooth 
surface $S_i, i=1,2$, $C_1$ is not tangent to $S_2$ and $C_2$ is not tangent to $S_1$. 
Then there are local coordinates $x,y,z$ at $p$ for which 
$I_{Z_1} = (x,z^m)$, $I_{Z_2} = (y,z^n)$ and 
\[ 
I_{Z}=(xy,yz^m,z^n).
\]
\end{prop}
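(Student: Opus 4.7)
The plan is to find local coordinates $(x,y,z)$ at $p$ in the completed local ring $\widehat R = \widehat{\O}_{\Pthree,p}$ for which $I_{C_1}=(x,z)$ and $I_{C_2}=(y,z)$ \emph{simultaneously}, and then extract $I_Z$ via Lemma \ref{intersect}. Since $S_1,S_2$ are smooth at $p$, choose $x,y \in \fm$ to be their local equations. The tangency hypothesis forces $T_pS_1 \neq T_pS_2$: were they equal, then $T_pC_1 \subset T_pS_1 = T_pS_2$ would make $C_1$ tangent to $S_2$. Hence $x,y$ are independent in $\fm/\fm^2$, and we extend by any $z_0$ to a regular system of parameters $(x,y,z_0)$.

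Next align the tangents. Since $T_pC_1 \subset T_pS_1 = \{x=0\}$ has nonzero $y$-component (as $T_pC_1 \not\subset T_pS_2=\{y=0\}$), we may normalize $T_pC_1 = \partial_y + \lambda_1 \partial_{z_0}$, and symmetrically $T_pC_2 = \partial_x + \lambda_2 \partial_{z_0}$ for scalars $\lambda_i \in k$. Setting $z := z_0 - \lambda_1 y - \lambda_2 x$ again yields a regular system of parameters, and both tangent vectors now annihilate $dz$. In $\widehat R/(x) \cong k[[y,z]]$ the smooth curve $C_1$ is therefore cut out by an element whose linear part is $z$; solving for $z$ as a function of $y$ (implicit function theorem, or Weierstrass preparation) rewrites this generator as $z - \phi_1(y)$ with $\phi_1 \in y^2 k[[y]]$, and symmetrically $C_2 \subset \widehat R/(y)$ is cut out by $z - \phi_2(x)$ with $\phi_2 \in x^2 k[[x]]$. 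One further change $\tilde z := z - \phi_1(y) - \phi_2(x)$ preserves the regular system of parameters (the correction lies in $\fm^2$), and since $\phi_2(x) \in (x)$ and $\phi_1(y) \in (y)$ we obtain $\tilde z \equiv z - \phi_1(y) \pmod{x}$ and $\tilde z \equiv z - \phi_2(x) \pmod{y}$. Relabeling $\tilde z$ as $z$ gives $I_{C_1}=(x,z)$ and $I_{C_2}=(y,z)$; hence $I_{Z_1}=(x,z^m)$ and $I_{Z_2}=(y,z^n)$, since the multiplicity-$m$ (resp.\ $n$) structure on a smooth curve inside a smooth surface has ideal $I_{C_i/S_i}^{m_i}$.

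It remains to compute the intersection. Apply Lemma \ref{intersect} with $a=x$, $b=z^m$, $c=y$, $d=z^n$: the triple $x,y,z^n$ is a regular sequence in the regular local ring, and $d=z^n = z^{n-m}\cdot z^m \in (x,z^m)=(a,b)$ since $n\geq m$. The lemma then yields
\[
I_Z \;=\; I_{Z_1}\cap I_{Z_2} \;=\; (ac,bc,d) \;=\; (xy,\,yz^m,\,z^n).
\]

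The substantive step is the simultaneous coordinate change in the second paragraph: the two nonlinear corrections $\phi_1(y)$ and $\phi_2(x)$ live in $(y)$ and $(x)$ respectively, so a single subtraction $\tilde z = z - \phi_1(y) - \phi_2(x)$ lets one $z$ serve as the common second generator for both $I_{C_1}$ and $I_{C_2}$. Everything else reduces to a direct application of Lemma \ref{intersect}.
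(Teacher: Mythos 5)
Your proof is correct and follows essentially the same route as the paper: produce coordinates in which $I_{C_1}=(x,z)$ and $I_{C_2}=(y,z)$ simultaneously, read off $I_{Z_1}=(x,z^m)$ and $I_{Z_2}=(y,z^n)$, and apply Lemma \ref{intersect} with $a=x$, $b=z^m$, $c=y$, $d=z^n$. The only difference is that the paper obtains the common coordinate $z$ in one stroke by taking $\{z=0\}$ to be a smooth surface containing both $C_1$ and $C_2$ near $p$, whereas you build $z$ explicitly by aligning tangents and subtracting the corrections $\phi_1(y)+\phi_2(x)$ in the completed local ring; both are valid.
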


\begin{proof}
Locally we may assume that $S_{1}$ is given by equation $x=0$ and 
$S_{2}$ is given by equation $y=0$. 
Letting $z=0$ be the equation of a smooth surface containing both 
$C_1$ and $C_2$ near $p$, the lack of tangency conditions imply 
that $x,y,z$ is a regular system of parameters at $p$ and 
we obtain $I_{C_1}=(x,z)$ and $I_{C_2}=(y,z)$. Given that 
$Z_i \subset S_i$ with the multiplicities given, it's clear that 
$I_{Z_1} = (x,z^m)$ and $I_{Z_2} = (y,z^n)$. Taking $a=x, b=z^m, c=y, d=z^n$, 
we have $d \in (a,b)$ because $m \leq n$, so the intersection ideal 
is $(xy,y z^m, z^n)$ by Lemma \ref{intersect}. 
\end{proof}

\begin{prop}\label{picloc1}
For $Z$ as in Proposition \ref{case1} above, the general surface $S$ 
containing $Z$ has an $\rdA_{n-1}$ singularity at $p$ and $C_1$ (resp. $C_2$) maps to $1$ (resp. $-1$) under the isomorphism $\Cl \widehat \O_{S,p} \cong \Z / n \Z$. 
\end{prop}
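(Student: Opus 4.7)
The plan is to carry out Method \ref{madness}. By Proposition \ref{case1}, in suitable analytic coordinates at $p$ the ideal $I_Z$ is generated by $xy$, $yz^m$, $z^n$. For $d \gg 0$ and very general $S \in |H^0(\mathbb P^3, I_Z(d))|$, the local equation of $S$ at $p$ can therefore be written
\[
F = A\,xy + B\,yz^m + C\,z^n
\]
with $A, B, C$ units in $\widehat{\mathcal O}_{\mathbb P^3, p}$ (for $d \gg 0$ the coefficients at $p$ can be chosen independently general, hence nonzero).

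To identify the singularity, I would group $F = y(Ax + Bz^m) + Cz^n$ and set $X = Ax + Bz^m$. Because $A$ is a unit, this defines a genuine analytic change of coordinates: the transition matrix from $(x,y,z)$ to $(X,y,z)$ on $\fm/\fm^2$ is upper triangular with invertible diagonal, regardless of whether $m=1$ (when $z$ contributes to the linear part of $X$) or $m \geq 2$. In the new variables,
\[
F = Xy - (-C)z^n
\]
with $-C$ a unit, which is the standard analytic form of an $\rdA_{n-1}$ singularity. Thus $\widehat{\mathcal O}_{S,p}$ is $\rdA_{n-1}$ and $\Cl \widehat{\mathcal O}_{S,p} \cong \Z/n\Z$.

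Step 3 of Method \ref{madness} then traces the curve supports. Since $A$ is a unit and $z^m \in (z)$, we have $(x,z) = (Ax + Bz^m, z) = (X,z)$, so the support of $C_1$ has class $1 \in \Z/n\Z$ by Example \ref{image}(a). The support of $C_2$ has ideal $(y,z)$ in either set of coordinates and class $-1 \in \Z/n\Z$ by Example \ref{image}(b). The main point — and the reason there is no real obstacle — is that the multiplicity $m$ of the smaller structure plays no role in the singularity type: the term $Byz^m$ is absorbed into $xy$ by a single change of $x$-variable, so only the larger multiplicity $n$ governs the $\rdA_{n-1}$.
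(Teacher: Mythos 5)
Your proof is correct and follows essentially the same route as the paper: a single change of $x$-variable $X = Ax + Bz^m$ (the paper uses $X = x + bz^m$ after normalizing the leading unit) absorbs the $yz^m$ term, yielding $Xy + Cz^n$ and hence the $\rdA_{n-1}$ form, after which the classes of $(X,z)$ and $(y,z)$ are read off from Example \ref{image}(a),(b). The only cosmetic difference is that the paper also cites Theorem \ref{ratrecog} ($\mu = n$, $\delta(n) \neq 0$) to confirm the singularity type, whereas you identify it directly from the explicit normal form; both handle the residual unit in front of $z^n$ the same way, by absorbing it into a variable.
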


\begin{proof}
The general element of $I_Z$ has the form $xy+byz^m+cz^n$ with units
$b,c \in \O_{\Pthree,p}$. In the language of Theorem~\ref{ratrecog}, 
$\mu = n$ and $\delta(n) = c\neq 0$, so the singularity is of type $\rdA_{n-1}$.
The first change of variables $X=x+bz^m$ is the only one necessary, giving 
us immediately (up to units) the form $XY - z^n$; furthermore, the ideal defining 
$C_1$ is is $(x, z) = (X-cz^n, z) = (X, z)$ and $I_{C_2} = (y,z)=(Y,z)$, 
so these curves are the canonical generators $\pm 1$ for the group 
$\Cl \widehat \O_{S,p} \cong \Z/ n\Z$ by Examples \ref{image} (a) and (b).


\end{proof}

The mixed tangency case is more complicated. 

\begin{prop}\label{case2}
Let $C_1, C_2$ be smooth curves meeting transversely at $p$, 
$C_i \subset S_i$ local smooth surfaces, and $Z_1 = m C_1 \subset S_1, 
Z_2 = n C_2 \subset S_2$ multiplicity structures. 
Assume that $C_1$ meets $S_2$ transversely and $C_2$ is tangent to
$S_1$ of order $q > 1$. 
Then there are local coordinates $x,y,z$ at $p$ for which 
\[
I_{Z_1}=(x-z^q,z^m), \; \; 
I_{Z_2}=(y,x^n) 
\]
and the intersection ideal $I_{Z_1 \cup Z_2} = I_{Z_1} \cap I_{Z_2}$ takes the form: 
\begin{enumerate}
\item[(a)]\label{singequ2a}
If $m \leq q$, then $I_Z=(xy,yz^{m},x^{n})$. 
\item[(b)]\label{singequ2b}
If $q < m < qn$, then $I_Z=(y(x-z^{q}),yz^{m},x^{n})$. 
\item[(c)]\label{singequ2c}
If $m \geq qn$, then 
$I_Z = (y(x-z^{q}),yz^{m},x^{n} z^{m-qn})$. 
\end{enumerate}
\end{prop}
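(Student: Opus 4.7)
My plan is to first normalize the local coordinates so that $I_{Z_1}$ and $I_{Z_2}$ take the stated forms, and then compute $I_Z = I_{Z_1}\cap I_{Z_2}$ in each case by applying Lemma \ref{intersect} with generators chosen to fit its hypotheses.

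For the coordinate normalization, I would begin from smoothness of $S_2$ to pick local coordinates with $S_2 = \{y=0\}$, and then arrange $C_2 \subset S_2$ to be the $z$-axis $\{x = y = 0\}$. The tangency data now pin down $T_p S_1$: it must contain the $z$-axis (because $C_2$ is tangent to $S_1$ to order $q \geq 2$) yet differ from $T_p S_2$ (because $C_1 \subset S_1$ meets $S_2$ transversely), so a linear shift mixing $x$ with $y$ yields $T_p S_1 = \{x = 0\}$ and $S_1 = \{x = \phi(y,z)\}$ with $\phi(0,z) = u(z)\,z^q$ for a unit $u$. Lemma \ref{comproots} lets me extract a $q$-th root of $u$ and absorb it into $z$, achieving $\phi(0,z) = z^q$; a further shift in $x$ and $z$ (by series in $y$) coming from the smooth structure of $C_1 \subset S_1$ with tangent off the $z$-axis then brings $C_1$ to the $y$-axis and $S_1$ to $\{x = z^q\}$. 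This produces $I_{Z_1} = (x-z^q, z^m)$ together with $I_{Z_2} = (y, x^n)$.

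For the intersection, I apply Lemma \ref{intersect} with $a = x - z^q$, $b = z^m$, $c = y$, and $d$ chosen so that $d \in (a,b)$. The sequence $a, c, d$ is regular since $R/(a,c) \cong k[[z]]$ is a domain. Taking $d = x^n$ works exactly when $z^{qn} \in (z^m)$, i.e., $m \leq qn$, which is the common range of cases (a) and (b). In those cases the lemma yields $(y(x-z^q), yz^m, x^n)$; case (a) additionally satisfies $z^q \in (z^m)$, letting me replace $y(x-z^q)$ by $xy$ modulo $yz^m$ to reach $(xy, yz^m, x^n)$.

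Case (c) is the main obstacle: for $m > qn$ the choice $d = x^n$ no longer lies in $(a,b)$. My plan is to substitute $d' = x^n z^{m-qn}$, which does satisfy $d' \in (a,b)$ via the identity $x^n z^{m-qn} = z^{m-qn}(aP + z^{qn}) = z^{m-qn} a P + z^m$ with $P = x^{n-1} + x^{n-2} z^q + \cdots + z^{q(n-1)}$. The lemma applied to $(a,b) \cap (c, d')$ delivers the candidate ideal $J = (y(x-z^q), yz^m, x^n z^{m-qn})$, and the containment $J \subset I_Z$ is routine. The technical heart is the reverse containment: given $f \in I_Z$, I would write $f = Cy + Dx^n$ (from $f \in I_{Z_2}$) and $f = A(x-z^q) + Bz^m$ (from $f \in I_{Z_1}$), equate the two, then reduce modulo $y$ and further modulo $x-z^q$ to force $D \equiv B z^{m-qn} \pmod{(x-z^q, y)}$, and finally exploit the regular-sequence behavior of $(x-z^q, y)$ in $R$ to solve for $A$ and $C$ and rewrite $f$ as an $R$-combination of the three generators of $J$, using $(x-z^q)P = x^n - z^{qn}$ to absorb the residual $x^n(x-z^q)$-type terms. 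The boundary $m = qn$ falls out for free since $d' = x^n$ recovers the original lemma setup.
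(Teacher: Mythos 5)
Your normalization and cases (a) and (b) are essentially sound and follow the same strategy as the paper (put $I_{Z_1}, I_{Z_2}$ in standard form, then invoke Lemma \ref{intersect}); in fact your handling of (a) is cleaner than the paper's, since you keep $a = x - z^q$, so that $a,c,d = x-z^q, y, x^n$ really is a regular sequence, and then observe $y(x-z^q) \equiv xy \pmod{yz^m}$ when $m \le q$. The final step of your normalization (the ``further shift in $x$ and $z$ by series in $y$'') hides a genuine computation --- you must translate $z$ by the $z$-coordinate $\zeta(y)$ of a parametrization of $C_1$ and then correct $x$ by $(z+\zeta)^q - z^q \in (y)$ to restore $S_1 = \{x = z^q\}$, checking that $(y, x^n)$ survives --- but this does work.

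Case (c) is where your argument breaks, and it cannot be repaired in the form you propose. You are right that Lemma \ref{intersect} with $d' = x^n z^{m-qn}$ only computes $(x-z^q, z^m) \cap (y, x^n z^{m-qn})$ and that the inclusion $I_Z \subseteq J$ needs a separate argument; but that inclusion is \emph{false} for $m > qn$. The element $x^n(x - z^q)$ lies in $I_{Z_1} \cap I_{Z_2}$ (it is a multiple of $x - z^q$ and of $x^n$) yet not in $J = (y(x-z^q), yz^m, x^n z^{m-qn})$: reducing mod $y$ one would need $x - z^q \in (z^{m-qn})$ in $k[[x,z]]$, impossible for $m - qn \ge 1$ since the monomial $x$ cannot occur in a multiple of $z^{m-qn}$. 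Hence the ``absorption'' of the residual $x^n(x-z^q)$ terms via $(x-z^q)P = x^n - z^{qn}$ that you gesture at is not possible. Carrying out your own reduction honestly --- write $f = Cy + Dx^n$, deduce $D \equiv G z^{m-qn}$ mod $(x - z^q, y)$, then use that $y$ is a nonzerodivisor mod $(x-z^q, z^m)$ --- gives
$I_Z = (y(x-z^q),\, yz^m,\, x^n(x-z^q),\, x^n z^{m-qn})$,
with a fourth generator. This is actually a defect in the statement of part (c) itself (and in the paper's own application of Lemma \ref{intersect} there, which likewise silently replaces $I_{Z_2} = (y, x^n)$ by the smaller ideal $(y, x^n z^{m-qn})$); note that the local surface equation used later in Proposition \ref{picloc2c} contains the term $vx^{n+1} - vx^n z^q = vx^n(x-z^q)$, consistent with the four-generator ideal. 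When $m = qn$ the extra generator is redundant and the stated formula is correct, as you observe.
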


\begin{proof} 
Let $x=0$ (resp. $y=0$) be a local equation for $S_{1}$ (resp. $S_{2}$). 
Since $C_1$ meets $S_2$ transversely, we can extend $x,y$ to a regular 
sequence $x,y,z$ with $I_{C_1}=(x,z)$. 
Locally $C_2$ meets $S_1$ tangently to order $q > 1$, so we may write 
$I_{C_2}=(x+\alpha,y)$ with $\alpha \in (x,y,z)^q$. 
Now $I_{C_2 \cap S_1} = (x,y,\alpha)$ defines a scheme of length $q$, 
so $\alpha = u z^{q}$ modulo $(x,y)$ for some unit $u$: writing 
$\alpha = u z^q + xf + yg$ we have 
\[
I_{C_2}=(x+\alpha,y)=(x+uz^q+xf+yg,y)=(x(1+f)+u z^{q}, y)
\]
where $(1+f)$ is a unit. Replacing $x$ with $\displaystyle \frac{x(1+f)}{u}+z^q$ we have
\[
I_{Z_1}=(x-z^q,z^m) \;\;\;\;\;\;\; I_{Z_2}=(y,x^n)
\]
and it remains to find the intersection $I_{Z}=I_{Z_{1}} \cap I_{Z_{2}}$.

If $m \leq q$ (including the case $q = \infty \Rightarrow \alpha = 0
\Rightarrow C_{2} \subset S_{1}$), 
then $I_{Z_{1}}=(x,z^{m})$. Apply Lemma \ref{intersect} with 
$a=x,b=z^{m},c=y$ and $d=x^{n}$, we obtain and $I_{Z}=(xy,yz^{m},x^{n})$. 

If $q < m < qn$, then $z^{qn} = z^m \cdot z^{qn-m} \in I_{Z_1}$ and also 
$(x-z^{q})|(x^{n}-z^{qn}) \Rightarrow x^{n}-z^{qn} \in I_{Z_1}$ so $x^n \in I_{Z_1}$. 
Application of Lemma \ref{intersect} with $a=x-z^{q}, b=z^{m}, c=y, d=x^{n}$ 
gives $I_{Z}=(y(x-z^{q}),yz^{m},x^{n})$.

If $qn \leq m$, then we have the telescoping sum 
\[
z^m + z^{m-q} (x - z^q) + x z^{m-2q} (x-z^q) + \dots + x^{n-1} z^{m-nq} (x-z^q) = 
x^n z^{m-qn} \in I_{Z_1}
\] 
and so we can again apply Lemma \ref{intersect} with $a=x-z^{q}, 
b=z^{m}, c=y, d=x^{n} z^{m-qn}$ to obtain $I_{Z}=(y(x-z^{q}),yz^{m},x^{n} z^{m-qn})$. 
\end{proof}

\begin{prop}\label{picloc2a} For $Z = Z_1 \cup Z_2$ as in 
Proposition \ref{case2} (a) with $m \leq q$, the general 
surface $S$ containing $Z$ has a singularity of type $\rdA_{mn-1}$ 
at $p$ and $C_1$ (resp. $C_2$) 
maps to $1$ (resp. $mn-m$) under the isomorphism 
$\Cl \widehat \O_{S,p} \cong \Z / mn \Z$.
\end{prop}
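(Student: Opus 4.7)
The approach is to apply Proposition \ref{2birds} directly, then track the image of $C_2$ through the coordinate changes it prescribes.

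By Proposition \ref{case2}(a), the general element of $I_Z$ near $p$ has the form $f = axy + byz^m + cx^n$ for units $a, b, c \in \O_{\Pthree,p}$. After dividing by $a$ and absorbing signs, this matches the equation treated in Proposition \ref{2birds}, yielding $\widehat{\O}_{S,p} \cong k[[X, Y, z]]/(XY - wz^{mn})$ with $w = vu^n$ (an $\rdA_{mn-1}$ singularity), where $X = x - uz^m$ and $Y = y - v\alpha$ for the polynomial $\alpha$ determined by $X\alpha = x^n - u^n z^{mn}$. Since $I_{C_1} = (x, z) = (X, z)$, Example \ref{image}(a) identifies $C_1$ with class $1 \in \Z/mn\Z$.

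The more delicate computation is the class of $C_2 = V(x - z^q, y)$. The first generator transforms as $x - z^q = X + uz^m - z^q = X - u'z^m$, where $u' = z^{q-m} - u$; this is a unit when $q > m$ (leading term $-u$), and in the boundary case $q = m$ the generic condition $u(0) \neq 1$ ensures $u' = 1 - u$ remains a unit. To recast the second generator $y = Y + v\alpha$ in a recognizable form, I work inside $\widehat{\O}_{S,p}$: on the locus $X = u'z^m$ the ring relation $XY = wz^{mn}$ forces $Y = (w/u') z^{mn-m}$, so $v\alpha \equiv -(w/u')z^{mn-m} \pmod{X - u'z^m}$ in $\widehat{\O}_{S,p}$. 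Consequently
\[
I_{C_2} = \bigl(X - u'z^m,\ Y - (w/u') z^{mn-m}\bigr)
\]
in $\widehat{\O}_{S,p}$, and the product of the two units is $u' \cdot (w/u') = w$, matching the coefficient of $z^{mn}$ in the surface equation. Example \ref{image}(c), in the form for units stated in the remark immediately after it, applied with ambient index $n = mn - 1$ and exponent $n - r + 1 = m$, identifies this class with $r = mn - m$, as required.

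The main obstacle is the bookkeeping of units, particularly arranging the compatibility $u_1 u_2 = w$ needed to invoke Example \ref{image}(c). Imposing the ring relation $XY = wz^{mn}$ before reducing modulo the first generator automatically enforces this, because $C_2$ must restrict to a one-dimensional subscheme on the surface, which pins down $Y$ on the locus $X = u'z^m$. With that compatibility in hand the argument reduces to direct application of already-developed tools: Proposition \ref{2birds} for the singularity type and Example \ref{image} for the classes of the distinguished curves.
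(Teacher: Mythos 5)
There is a genuine error in the identification of $C_2$. In the coordinates of Proposition \ref{case2}, $I_{Z_2}=(y,x^n)$, so the support is $C_2=V(x,y)$, not $V(x-z^q,y)$: the coordinate change in that proof (replacing $x$ by $\frac{x(1+f)}{u}+z^q$) turns the original $I_{C_2}=(x(1+f)+uz^q,\,y)$ into $(x,y)$, while it is $I_{Z_1}$ (equivalently, the local equation of $S_1$) that acquires the factor $x-z^q$. Once $C_2=V(x,y)$ is used, the class $mn-m$ is literally the second assertion of Proposition \ref{2birds}, which is all the paper's proof invokes; no further tracking through the coordinate changes is needed.

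The curve you actually compute with, $V(x-z^q,y)$, is not even contained in $S$: substituting $x=z^q$, $y=0$ into the local equation $xy-uyz^m-vx^n$ leaves $-vz^{qn}\neq 0$, so the ideal $(x-z^q,y)$ cuts out a length-$qn$ fat point of $S$ at $p$ and has no divisor class. This is also exactly where the algebra breaks: in $\widehat \O_{S,p}/(X-u'z^m)\cong k[[Y,z]]/\bigl(z^m(u'Y-wz^{mn-m})\bigr)$ one cannot cancel $z^m$ to conclude $Y=(w/u')z^{mn-m}$, since that quotient is not a domain. A direct reduction of $v\alpha$ modulo $X-u'z^m$ in $k[[X,Y,z]]$ gives $v\alpha\equiv v\sigma z^{mn-m}$ with $u'\sigma=z^{(q-m)n}-u^n$, and then $-u'\cdot v\sigma=vu^n-vz^{(q-m)n}\neq w$, confirming that the curve misses $S$. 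That your formal manipulation still outputs $mn-m$ is a coincidence of leading terms (both candidate ``units'' have constant term $vu^{n-1}$); it is not a proof, and the spurious genericity condition $u(0)\neq 1$ you need when $q=m$ is a symptom of the misidentification.
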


\begin{proof} In view of Prop. \ref{singequ2a} (a), the general surface
$S$ containing 
$Z_1 \cup Z_2$ has local equation $xy - uyz^m - vx^n$ with $u,v$ units in
$\O_{\mathbb P^3,p}$ and $x,y,z$ a regular sequence of parameters. Noting that 
$C_1$ is given by the ideal $(x,z)$ and $C_2$ is given by $(x,y)$, the 
result follows from Proposition \ref{2birds}.
\end{proof}

\begin{prop}\label{picloc2b} 
For $Z = Z_1 \cup Z_2$ as in Proposition \ref{case2} (b) with $q < m < qn$, 
the general surface $S$ containing $Z$ has a singularity of 
type $\rdA_{qn}$ at $p$ and $C_1$ (resp. $C_2$) maps to $1$ (resp. $qn-q$) 
under the isomorphism $\Cl \widehat \O_{S,p} \cong \Z / qn \Z$.
\end{prop}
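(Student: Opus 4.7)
The plan is to reduce to Proposition \ref{2birds}. By Proposition \ref{case2}(b), we may choose local coordinates $x, y, z$ at $p$ in which $I_Z = (y(x-z^q),\, yz^m,\, x^n)$, so the local equation of a general surface $S \supset Z$ has the form
\[
F = \alpha\, y(x - z^q) + \beta\, y z^m + \gamma\, x^n
\]
with $\alpha, \beta, \gamma \in \O_{\Pthree, p}$ general, hence units at $p$. Dividing by $\alpha$ and relabeling yields $F = xy - yz^q(1 - bz^{m-q}) + cx^n$ with $b, c$ units in $\O_{\Pthree, p}$.

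The hypothesis $q < m$ is used precisely here: it forces $bz^{m-q} \in \fm$, so $u := 1 - bz^{m-q}$ is a unit and
\[
F = xy - u\, y z^q - c\, x^n.
\]
This is exactly the canonical form of Proposition \ref{2birds}, with its parameter $m$ replaced by $q$, its unit $u$ as above, and its $v$ by $-c$. That proposition then identifies $\widehat\O_{S,p}$ as the claimed $\rdA$ singularity with $\Cl \widehat\O_{S,p} \cong \Z / qn \Z$, and delivers the images of the ideals $(x, z)$ and $(x, y)$ as $1$ and $qn - q$ respectively in $\Z/qn\Z$.

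The remaining step is bookkeeping: in the coordinates furnished by Proposition \ref{case2}, the reduced curves are $C_1 = V(x - z^q, z) = V(x, z)$ and $C_2 = V(y, x)$, so they are the curves whose class-group images have just been computed, and the asserted values follow. The other inequality $m < qn$ is used only to place us in case (b) of Proposition \ref{case2} (not case (c), where the generator $x^n$ would be replaced by $x^n z^{m-qn}$ and the reduction would look different). No serious obstacle is anticipated; the one point that requires care is the absorption of the $yz^m$ term into $yz^q$ via the unit $1 - bz^{m-q}$, which is what distinguishes this case from Proposition \ref{picloc2a}, where no $z^q$ term is present and the $yz^m$ term instead pairs with $x^n$.
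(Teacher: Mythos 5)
Your proposal is correct and follows essentially the same route as the paper: write the general local equation from the ideal $(y(x-z^q), yz^m, x^n)$, absorb the $yz^m$ term into the $yz^q$ term using the unit $1 - bz^{m-q}$ (valid precisely because $q < m$), and invoke Proposition \ref{2birds} with its parameter $m$ set to $q$ to get the singularity type and the images of $(x,z)$ and $(x,y)$. Note that what this yields is an $\rdA_{qn-1}$ singularity (consistent with $\Cl \widehat\O_{S,p} \cong \Z/qn\Z$); the $\rdA_{qn}$ in the statement is a typo.
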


\begin{proof}
The general surface containing $Z$ has local equation $xy - y z^q - u y z^m - v x^n$ 
for units $u,v \in \O_{\Pthree,p}$ and since $w = 1 + u z^{m-q}$ is a unit we may 
write the equation as $xy - w y z^q - v x^n$ and we can apply Proposition \ref{2birds} 
to see the $\rdA_{qn-1}$ singularity and that $C_1$ with ideal $(x,z)$ corresponds to 
$1 \in \Z / qn \Z \cong \Cl \widehat \O_{S,p}$ while $C_2$ with ideal $(x,y)$ corresponds 
to $qn-q$. 
\end{proof}

\begin{prop}\label{picloc2c}
For $Z = Z_1 \cup Z_2$ as in Proposition \ref{case2} (c) with $qn \leq m$, 
the very general surface $S$ containing $Z$ has a singularity of type $A_{m-1}$ 
and $C_1$ (resp. $C_2$) maps to $1$ (resp. $m-q$) under the isomorphism 
$\Cl \widehat \O_{S,p} \cong \Z / m \Z$.
\end{prop}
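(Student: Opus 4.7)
The plan is to adapt the argument of Proposition~\ref{2birds} to accommodate the extra factor of $z^{m-qn}$ on the $x^n$ term. By Proposition~\ref{case2}(c), a very general surface $S$ containing $Z$ has local equation
\[
F = a_1\, y(x - z^q) + a_2\, y z^m + a_3\, x^n z^{m-qn}
\]
with $a_1, a_2, a_3 \in \O_{\Pthree,p}$ units. After absorbing $a_1$ into $y$ and combining the first two terms, I rewrite $F$ as
\[
F = y(x - u z^q) + c\, x^n z^{m-qn},
\]
where $u = 1 - (a_2/a_1) z^{m-q}$ is a unit (using $m>q$) and $c = a_3/a_1$ is a unit.

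Next I introduce $X = x - u z^q$ to absorb the $-u y z^q$ term into $xy$. Writing $(X+uz^q)^n = X\beta + u^n z^{qn}$ with $\beta = \sum_{k=0}^{n-1}\binom{n}{k+1} X^{n-1-k}(u z^q)^k$, the equation becomes
\[
F = X\bigl(y + c\beta z^{m-qn}\bigr) + c u^n z^m.
\]
Since $\beta z^{m-qn} \in \fm$, the substitution $Y = y + c\beta z^{m-qn}$ is a valid analytic change of coordinates, yielding $F = XY + c u^n z^m$. This is the standard form of an $\rdA_{m-1}$ singularity, so $\Cl \widehat\O_{S,p} \cong \Z/m\Z$.

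Finally I identify the curve classes. The support $C_1$ has ideal $(x,z) = (X+uz^q, z) = (X,z)$, which corresponds to $1 \in \Z/m\Z$ by Example~\ref{image}(a). For $C_2$, whose ideal in the Proposition~\ref{case2} coordinates is $(x,y) = (X+uz^q,\, Y - c\beta z^{m-qn})$, the key reduction is that substituting $X = -u z^q$ into the identity $X\beta = (X+uz^q)^n - (uz^q)^n$ gives $\beta \equiv u^{n-1} z^{q(n-1)}$ modulo $X+uz^q$. Hence $I_{C_2} = (X+uz^q,\, Y - c u^{n-1} z^{m-q})$, and Example~\ref{image}(c) applied with units $-u$ and $c u^{n-1}$ (whose product $-c u^n$ matches the coefficient of $z^m$ in $XY + c u^n z^m$) identifies this class as $r = m-q$. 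The main obstacle is simply the bookkeeping of units through the coordinate changes to verify the Example~\ref{image}(c) identification for $C_2$; the singularity-type assertion itself is an immediate consequence of Theorem~\ref{ratrecog} once $F$ is put in the normal form above.
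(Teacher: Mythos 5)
Your argument is correct and is essentially the paper's own proof: the same two substitutions ($X=x-uz^q$ to absorb the terms linear in $y$, then $Y=y+c\beta z^{m-qn}$) reduce $F$ to $XY+(\mathrm{unit})\cdot z^m$, and the identification of the classes of $C_1$ and $C_2$ via Example~\ref{image}, including the check that the product of units matches the coefficient of $z^m$, goes through exactly as in the paper. The one place you diverge is the starting local equation: the paper's proof takes $F=xy-yz^{q}+uyz^{m}+vx^{n+1}-vx^nz^{q}+wx^{n}z^{m-qn}$, which carries the additional term $vx^n(x-z^q)$. That element lies in $I_{Z_1}\cap I_{Z_2}$ but, when $m>qn$, is \emph{not} a unit-coefficient combination of the three generators listed in Proposition~\ref{case2}(c) (reducing mod $y$, $x^n(x-z^q)\in(x^nz^{m-qn})$ would force $z\mid x-z^q$), so the very general member of the linear system does contain such a term and your $F$ is a special member. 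The omission happens to be harmless: under $x=X+\beta z^q$ the extra term becomes $(X+\beta z^q)^n(vX-vuz^m)$, whose $X$-free part $-vu\beta^nz^{m+qn}$ has order $>m$ and whose contribution to $K(-\beta z^q,z)$ (in the paper's notation) has order $m+qn-q>m-q$, so neither the $\rdA_{m-1}$ type nor the class $m-q$ of $C_2$ changes — but a complete proof should either include this term or make the order-count argument that it cannot affect the outcome. (A minor slip: your $\beta$ should read $\sum_{k=0}^{n-1}\binom{n}{k}X^{n-1-k}(uz^q)^k$; since you only ever use the defining identity $X\beta=(X+uz^q)^n-(uz^q)^n$, nothing downstream is affected.)
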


\begin{proof}

The general surface containing $Z$ has local equation 
$$xy-yz^{q} + uyz^{m} + vx^{n+1}-vx^nz^{q} + wx^{n} z^{m-qn}$$
for units $u,v,w$. In the notation of Theorem~\ref{ratrecog} with the roles of $x$ and $y$ reversed 
we have $\ord(g_1) = q$ since $q<m$ and $h=0$. However, the value of $\mu$ so far depends on the relative size of $q$ an $m-nq$; therefore, we proceed with the first variable change suggested 
by the algorithm in the theorem, $X=x-z^q+uz^m$. For ease of expression we write 
$z^q-uz^m = \beta z^q$ with $\beta$ a unit, when the equation becomes

\begin{equation}\label{case2c}
Xy + \ubm{v(X+\beta z^q)^{n+1}-v(X+\beta z^q)^nz^{q} + w(X+\beta z^q)^{n} z^{m-qn}}{\alpha}.
\end{equation}

Write $\alpha = (X+\beta z^q)^n [vX - v u z^m + w z^{m-qn}] = X K(X,z)+w\beta^n z^m$ 
for appropriate $K(X,z)$ and set 
$Y=y+K(X,z)$ to obtain $XY +w \beta^n z^m$, showing the $\rdA_{m-1}$ singularity. 
Now, $I_{C_1} = (x,z) = (X,z)$ corresponds to 
$1 \in \Z/m\Z \cong \Cl \widehat \O_{S,p}$ by Example \ref{image}. Finally, 
\[
I_{C_2} = (x,y) = (X+\beta z^q,y) = (X+\beta z^q, Y-K(X,z)) = (X+\beta z^q, Y-K(-\beta z^q, z)).
\]
Evaluating $\alpha$ at $X = -\beta z^q$ gives zero; therefore 
$K((-\beta z^q, z)) = wn\beta^{n-1} z^{m-q}$ and 
$I_{C_2} = (X+\beta z^q, Y-wn\beta^{n-1} z^{m-q})$ corresponds to 
$m-q \in \Z/m\Z$ by Example~\ref{image} (c).
\end{proof}

\section{Multiple structures on a smooth curve}\label{multcurve}

Let $Z$ be a locally Cohen-Macaulay multiplicity structure supported 
on a smooth curve $C \subset \mathbb P^3$. The ideal sheaves $\I_Z + \I_C^i$ 
define one-dimensional subschemes of $Z$ and after removing the embedded 
points we arrive at the Cohen-Macaulay filtration 
\begin{equation}\label{CM}
C=Z_{1} \subset Z_{2} \subset \dots \subset Z_{m}=Z
\end{equation}
from work of Banica and Forster \cite{BF} (see also \cite[$\S 2$]{N}). 
If $Z$ has generic embedding dimension two, then the quotient sheaves 
$\I_{Z_{j}}/\I_{Z_{j+1}} = L_{j}$ are line bundles on $C$ and the 
multiplicity of $Z$ is $m$. In this section we solve Problem \ref{uno} for 
such a multiplicity structure $Z$ at a point $p$ of embedding 
dimension three assuming that $Z_{m-1}$ or $Z_{m-2}$ has local 
embedding dimension two at $p$. This class of curves contains 
all multiplicity structures on $C$ with multiplicity $m \leq 4$. 
First we describe the local ideal of $Z$ at $p$. 

\begin{prop}\label{simple}
Let $C \subset \Pthree$ be a smooth curve and Let $Z$ be a locally 
Cohen-Macaulay multiplicity $m$ structure on a $C$ of generic embedding 
dimension two with filtration (\ref{CM}). Let $p \in Z$ be a point of 
embedding dimension three at which $Z_{m-1}$ has embedding dimension two. 
Then 
\begin{enumerate}
\item[(a)] There are local coordinates $x,y,z$ for which $Z$ has local ideal 
$$
I_Y = (x^2,xy,x z^q - y^{m-1}).
$$
\item[(b)] The general surface $S$ containing $Z$ has an 
$\rdA_{(m-1)q-1}$ singularity at $p$ and and the class of 
$C$ maps to $q \in \Cl \widehat \O_{S,p} \cong \Z / q(m-1) \Z$.       
\end{enumerate}
\end{prop}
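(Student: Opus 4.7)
The plan is to first normalize the local ideal (part a), then apply Theorem~\ref{ratrecog} to the equation of a general containing surface while carefully tracking the ideal of $C$ (part b).

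For (a), the embedding-dimension-$2$ hypothesis on $Z_{m-1}$, combined with its being a CM mult-$(m-1)$ structure on the smooth curve $C$, gives local coordinates $x,y,z$ at $p$ with $I_C=(x,y)$ and $I_{Z_{m-1}}=(x,y^{m-1})$. The quotient $L_{m-1} = I_{Z_{m-1}}/I_Z$ is a line bundle on $C$, locally $\O_{C,p}$-free of rank $1$ and annihilated by $I_C$; hence $I_C \cdot I_{Z_{m-1}} = (x^2, xy, y^m) \subset I_Z$, and $I_Z/(I_C I_{Z_{m-1}})$ is the rank-$1$ kernel of the surjection from the free rank-$2$ $\O_C$-module $I_{Z_{m-1}}/(I_C I_{Z_{m-1}})$ (on $\bar x, \overline{y^{m-1}}$) onto $L_{m-1}$. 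This kernel is $\O_C$-generated by a single relation $a(z) x + b(z) y^{m-1} \in I_Z$ with $a, b \in k[[z]]$. A short case analysis rules out $a(0) \neq 0$ (which forces $I_Z \not\subset \mathfrak m_p^2$, violating embedding dimension $3$) and $b(0) = 0$ (which makes $z$ a zero-divisor in $\O_{Z,p}$, violating CM), leaving $b$ a unit and $a = u z^q$ with $u$ a unit and $q \geq 1$. Lemma~\ref{comproots} then supplies $(m-1)$-th and $q$-th roots enabling the rescaling of $y$ and $z$ that absorbs $b$ and $u$; this produces $I_Z = (x^2, xy, y^m, xz^q - y^{m-1})$, and the identity $y^m = xyz^q - y(xz^q - y^{m-1})$ renders $y^m$ redundant. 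Hilbert--Burch confirms Cohen-Macaulayness, since the three remaining generators are the $2\times 2$ minors of $\begin{pmatrix} y & z^q \\ -x & -y^{m-2} \\ 0 & x \end{pmatrix}$.

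For (b), the general surface $S$ containing $Z$ has local equation $f = \alpha x^2 + \beta xy + \gamma(xz^q - y^{m-1})$ with $\alpha, \beta, \gamma$ units in $\O_{\Pthree, p}$ at $p$. Writing $f = x(\alpha x + \beta y + \gamma z^q) - \gamma y^{m-1}$ and setting $Y = \alpha x + \beta y + \gamma z^q$ (a regular parameter since $\beta(p) \neq 0$), I substitute $y = \beta^{-1}(Y - \alpha x - \gamma z^q)$ to obtain
$$f = xY - \gamma \beta^{-(m-1)}(Y - \alpha x - \gamma z^q)^{m-1}.$$
Expanding the $(m-1)$-st power, the pure-$z$ contribution is $(-1)^m \gamma^m \beta^{-(m-1)} z^{q(m-1)}$, a unit times $z^{q(m-1)}$. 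In the notation of Theorem~\ref{ratrecog}, this yields $\mu = q(m-1)$ and $\delta(\mu) \neq 0$; a direct binomial check shows that $\min\{jr_1 + \ord g_j\} \geq q(m-1)$ (the only nonvanishing contribution comes from $j=m-1$, giving exactly $q(m-1)$), so $S$ has an $\rdA_{q(m-1)-1}$ singularity at $p$.

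To identify the class of $C$, I track $I_C = (x,y)$ through the coordinate changes. After the first substitution, $I_C = (x, Y - \gamma z^q)$. The algorithm of Theorem~\ref{ratrecog} next applies $X = x + g_1(z)$ to eliminate the linear-in-$Y$ term, where $g_1$ arises from the $Y \cdot (-\gamma z^q)^{m-2}$ binomial contribution and has $z$-order exactly $q(m-2)$. By Lemma~\ref{induct}(d), all subsequent corrections to $x$ and $Y$ have strictly larger $z$-order, so the leading $z$-terms survive; in the limit, the standard form $XY = \text{unit} \cdot z^{q(m-1)}$ holds with $I_C = (X - u_1 z^{q(m-2)}, Y - u_2 z^q)$ for units $u_1, u_2 \in k[[z]]$. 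Since the two $z$-exponents sum to $q(m-1)$, Example~\ref{image}(c) identifies this class as $q \in \Z/q(m-1)\Z$. The main obstacle will be the bookkeeping in this final step: verifying via Lemma~\ref{induct}(d) that the iterated variable changes, which alternately kill linear-in-$Y$ and linear-in-$x$ terms, never disturb the leading $z^q$ and $z^{q(m-2)}$ terms in the two components of $I_C$.
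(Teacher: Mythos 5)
Your part (a) is essentially the paper's argument: realize $I_Z$ as the kernel of a surjection from $I_{Z_{m-1}}=(x,y^{m-1})$ onto a line bundle on $C$, rule out the case where the resulting relation has a nonzero linear term in $x$ (embedding dimension would drop to two), and normalize the remaining coefficient to $z^q$. For part (b), however, the paper does not argue at all --- it simply cites \cite[Prop.~4.5]{BN2} for the singularity type and the class of $C$ --- whereas you give a self-contained derivation via Theorem \ref{ratrecog} and Example \ref{image}(c), in the same spirit as Propositions \ref{2birds} and \ref{picloc2c}; this is a legitimate and arguably preferable route, since it keeps the computation inside the machinery the paper itself develops (by contrast, in the analogous but harder Proposition \ref{step2} the authors switch to explicit blow-ups to locate the class of $C$). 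Your computation of $\mu=q(m-1)$ and $\delta(\mu)\neq 0$ is correct, though the parenthetical ``the only nonvanishing contribution comes from $j=m-1$, giving exactly $q(m-1)$'' is garbled: the order-$q(m-1)$ term of $H$ comes from the pure $z$-part $h(z)$ (the $(-\gamma z^q)^{m-1}$ term), while $\min_j\{jr_1+\ord g_j\}$ equals $q(3m-7)$, which is $\ge q(m-1)$ with equality only when $m=3$; in that boundary case, and likewise in your final bookkeeping where the correction $f_1$ to $Y$ has order exactly $q=q(m-2)$ when $m=3$, non-cancellation of leading coefficients requires the genericity of the units $\alpha,\beta,\gamma$, which is covered by ``general $S$'' but should be said. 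A small simplification for the last step: since $C\subset S$ and the final equation is $XY=(\mathrm{unit})\,z^{q(m-1)}$, restricting to $C$ gives $\ord(X|_C)+\ord(Y|_C)=q(m-1)$ automatically, so once you know $\ord(Y|_C)=q$ you get $\ord(X|_C)=q(m-2)$ for free and Example \ref{image}(c) applies without separately tracking the $X$-component.
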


\begin{proof}
Consider the exact sequence 
$0 \to I_Z \to I_{Z_{m-1}} \stackrel{\pi}{\to} L \to 0$ near $p$. 
Since $Z_{m-1}$ has embedding dimension two at $p$, it locally lies 
on a smooth surface with equation $x=0$ and $I_C = (x,y)$ for suitable $y$, 
whence $I_{Z_{m-1}} = (x,y^{m-1})$. Now $I_Z$ appears locally as the kernel of a 
surjection $\pi:(x,y^{m-1}) \to \O/(x,y)$. If $\pi(y^{m-1}) = u$ is a unit and 
$\pi(x) = \bar h$ with $h \in \O$, then $x - u^{-1} h y^{m-1} \in \ker (\pi) = I_Z$, 
but then $x - u^{-1} h y^{m-1} \not \in \fm_p^2$ implies $Z$ has embedding 
dimension two at $p$, contrary to assumption.
Therefore we may assume $\pi (y^{m-1}) = \bar h$ for $h \in \fm_p$ and $\pi (x) = 1$ 
in which case $I_Z = \ker (\pi) = (x^2, xy, hx-y^{m-1})$. 
If $h \in \O/(x,y)$ vanishes to order $q > 0$ at $p$, we may write 
$h=uz^{q}$ mod $(x,y)$ where $u$ is a unit and $z$ is a local parameter for $C$ at $p$. 
Absorb $u$ into $x$ gives part (a). 

The local ideal of $Z$ is $(x^2,xy,xz^q-y^{m-1},y^m)$, the generator $y^m$ 
being redundant. For this ideal the result can be found in \cite[Prop. 4.5]{BN2}, 
where our purpose was to show that every subgroup of the completion of an $\rdA_r$ 
rational double point arises as the image of the class group, partially answering 
a question of Srinivas. Indeed, combining with Remark \ref{class}, the associated 
class group is $\Z / (m-1) \Z \subset \Z / q(m-1) \Z$. 
\end{proof}

\begin{prop}\label{structure} 
Let $C \subset \Pthree$ be a smooth curve and Let $Z$ be a locally 
Cohen-Macaulay multiplicity $m$ structure on a $C$ of generic embedding 
dimension two with filtration (\ref{CM}). Fix a point $p \in Z$ of 
embedding dimension three. 
If $Z_{m-2}$ has embedding dimension two at $p$ and $Z_{m-1}$ 
does not, then there are local coordinates $x,y,z$ for which $Z$ has local ideal 
\begin{enumerate}
\item[(a)] 
$(x^2, x y^2, x y z^q - y^{m-1}, xy - u z^w (z^q x - y^{m-2}))$, $u$ a unit and $w \geq 0$, or
\item[(b)] 
$(x^2,x y^2, f x y - (z^q x - y^{m-2}))$
where $f=0$ or $f = u z^w$ for some $w > 0$ and unit $u$.
\end{enumerate}
\end{prop}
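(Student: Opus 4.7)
The plan is to imitate the proof of Proposition \ref{simple}, but one step deeper in the filtration. First I apply Proposition \ref{simple} to $Z_{m-1}$, viewed as a multiplicity $m-1$ structure on $C$: by hypothesis its $(m-2)$-nd stage $Z_{m-2}$ has embedding dimension two, while $Z_{m-1}$ itself has embedding dimension three, so Proposition \ref{simple} (with $m$ replaced by $m-1$) applies --- tacitly $m \geq 4$, since a double structure on a smooth curve never has embedding dimension three. This yields local coordinates $x, y, z$ at $p$ and an integer $q \geq 1$ with
\[
I_{Z_{m-1}} = (x^2,\, xy,\, \xi), \qquad \xi = xz^q - y^{m-2}.
\]

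Next I examine the short exact sequence $0 \to I_Z \to I_{Z_{m-1}} \stackrel{\pi}{\to} L \to 0$, where $L = L_{m-1}$ is the line bundle on $C$ (locally $\cong \O_C = k[[z]]$) from the Banica--Forster filtration. The crucial first observation is that $\pi(x^2) = 0$: expanding $x\xi = x^2 z^q - xy^{m-2}$ and using $xy^{m-2} = y^{m-3}(xy)$ (valid as $m \geq 4$) produces the syzygy
\[
x \cdot \xi + y^{m-3} \cdot (xy) - z^q \cdot x^2 = 0
\]
in $I_{Z_{m-1}}$. Since $\pi$ is $\O$-linear and $I_C = (x, y)$ annihilates $L$, applying $\pi$ kills the first two summands and leaves $z^q \pi(x^2) = 0$ in $k[[z]]$; as $z^q$ is a non-zero divisor, $\pi(x^2) = 0$, so $x^2 \in I_Z$.

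Now set $\alpha = \pi(xy)$ and $\beta = \pi(\xi)$ in $k[[z]]$. Surjectivity of $\pi$ forces $(\alpha, \beta)$ to generate the unit ideal, so at least one of them is a unit; moreover, the generic embedding-dimension-two hypothesis along $C$ rules out the degenerate case $\alpha \equiv 0$ with $\beta$ a unit, which would give $I_Z = (x^2, xy, y^{m-1})$ and embedding dimension three at every closed point of $C$. The remaining possibilities split as follows. In case (a), $\beta$ is a unit and $\alpha/\beta = uz^w$ for some unit $u$ and $w \geq 0$. Solving $b\alpha + c\beta = 0$ gives $c \equiv -uz^w b$ modulo $I_C$, so
\[
I_Z = (x^2,\, xy - uz^w \xi,\, x\xi,\, y\xi).
\]
A short reduction using $x\xi = x^2 z^q - xy^{m-2}$, the relation $xy \equiv uz^w\xi$ modulo $I_Z$, and $y^{m-3}\xi = y^{m-4}(y\xi) \in (y\xi)$ shows that $x\xi$ is redundant, and adjoining the automatically-redundant generator $xy^2 = y \cdot xy$ recovers form (a). In case (b), $\beta$ is a non-unit, which forces $\alpha$ to be a unit; write $\beta/\alpha = f$ with $f = 0$ or $f = uz^w$ for some unit $u$ and $w > 0$. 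The kernel condition becomes $b \equiv -fc$ modulo $I_C$, and elements of $\ker \pi$ take the shape $ax^2 + e_1 x^2 y + e_2 xy^2 + c(\xi - fxy)$, giving $I_Z = (x^2,\, xy^2,\, fxy - \xi)$, which is form (b).

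The main obstacle is verifying that the candidate generator lists really describe $I_Z$ exactly --- neither too large nor too small --- and that the various redundancies go through uniformly across the parameter ranges for $q$ and $w$; once the identity $\pi(x^2) = 0$ is in hand, everything else reduces to bookkeeping with syzygies and a short linear-algebra exercise in $k[[z]]$.
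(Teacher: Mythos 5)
Your proof is correct and follows essentially the same route as the paper: apply Proposition \ref{simple} to $Z_{m-1}$, kill $x^2$ via the syzygy $z^q x^2 = x\xi + y^{m-3}(xy)$, split into cases according to which of $\pi(xy)$, $\pi(\xi)$ is a unit, and exclude the degenerate case by the generic embedding dimension two hypothesis. The only (cosmetic) difference is that the paper routes $\pi$ through the rank-two bundle $F = \I_{Z_{m-1}}\otimes\O_C/\{\mbox{torsion}\}$ and adds a Koszul relation, whereas you compute the kernel directly by a coefficient condition in $k[[z]]$; the resulting generating sets agree.
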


\begin{proof}
Use Prop. \ref{simple} (a) to write $I=I_{Z_{m-1}}=(x^2,xy,x z^q - y^{m-2})$. 
At the level of sheaves, $\pi$ factors through 
$\F = \I_{Z_{m-1}} \otimes \O_C / \{\mbox{torsion}\}$, a vector bundle on 
$C$ which has rank two because $Z_{m-1}$ is a generic local complete intersection. 
Working in the  free $\O/(x,y)$-module $F = (I \otimes \O/(x,y))/\{\mbox{torsion}\}$ 
near $p$, 
\[
z^q (x^2) = x (x z^q - y^{m-1}) + y^{m-2} (xy)=0
\]
shows that $x^2$ is torsion, hence zero. Therefore 
$F \cong (\O/(x,y))^2$ is freely generated by $xy$ and $x z^q - y^{m-2}$. 

The kernel of the map $I \to F$ is 
\[
(x^2) + (x,y) I = (x^2, xy^2, xy z^q - y^{m-1}).
\]
and we obtain $I_Z = \ker \pi$ by adding the Koszul relation for the surjection of 
free modules $F \to \O/(x,y)$. 
Surjectivity implies that $\pi(xy)$ or $\pi(z^q x - y^{m-2})$ is a unit in $\O/(x,y)$. 
If $\pi (z^q x - y^{m-2})= 1$ and $\pi (xy) = f \in \O/(x,y)$, the Koszul relation 
is $x y - f (z^q x - y^{m-2})$; here $f = u z^w$ for some unit $u$, since $f=0$ leads to the ideal 
$(x^2,xy,y^{m-1})$ which does not have generic embedding dimension two;
this gives the ideal in part (a).
Otherwise take $\pi (xy)=1$ and $\pi (z^q x - y^{m-1}) = f$ 
where $f=0$ or $f=u z^w$ for some $w > 0$ and unit $u$, when the Koszul relation is 
$f xy - (z^q x - y^{m-2})$, giving ideal in part (b).
\end{proof}

\begin{rmk}\label{existence}{\em
Propositions \ref{simple} and \ref{structure} give a local description of ideals 
of certain multiplicity structures $Z$ on a smooth curve $C$. Using the ideal 
in the Proposition to define multiple curves in $\mathbb A^3$, one can obtain 
global examples by taking the closure in $\Pthree$. 
When $C \subset \Pthree$ is a line and $m \leq 4$, all
such global structures have been classified \cite{N,NS}.
\em}\end{rmk}

\begin{prop}\label{step2} 
Let $Z$ be a multiplicity-$m$ structure on a smooth curve $C$ with local ideal at 
$p$ as in Prop. \ref{structure}(a). Then at $p$ the general surface 
$S$ containing $Z$ has an $\rdA_{(m-2)(q+w)+w-1}$ singularity and 
$C$ has class $q+w \in \Z / ((m-2)(q+w)+w) \Z \cong \Cl \widehat \O_{S,p}$.    
\end{prop}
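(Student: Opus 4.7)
My plan is to follow the strategy of Propositions \ref{picloc2a}--\ref{picloc2c}: write down the general local equation of $S$ at $p$, apply the explicit variable changes of Lemma \ref{induct} to reduce it to the standard $\rdA_{M-1}$ form, and track the ideal of $C$ through the reduction so that Example \ref{image}(c) gives its class.

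I would begin with the generic element
$$F = a_1 x^2 + a_2 xy^2 + a_3(xyz^q - y^{m-1}) + a_4\bigl(xy - uz^w(z^q x - y^{m-2})\bigr)$$
of $I_Z$, with $a_i$ generic units at $p$. After dividing by $a_4$, the tangent cone is $a_1 x^2 + xy = x(a_1 x + y)$, nondegenerate, so the linear substitution $Y = y + a_1 x$ absorbs the $x^2$ term into $xy$ and places $F$ in the setting of Theorem \ref{ratrecog}. (For the edge case $m=4, w=0$, the tangent cone also picks up a $uy^2$ term but remains nondegenerate; a suitable linear change still brings $F$ to the form $xY + \cdots$.) Reading off the data after this substitution, the relevant contributions are $f_1(z) = -u z^{q+w}$ of order $q+w$, $g_{m-2}(z) = u z^w$ of order $w$, and $g_{m-1}(z) = -a_3$ a unit, while $g_j = 0$ for $2 \le j \le m-3$. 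Hence
$$M = \min_{j \ge 2}\bigl\{j(q+w) + \ord g_j\bigr\} = (m-2)(q+w) + w,$$
attained at $j = m-2$ since $w < q+w$ forces $(m-2)(q+w)+w < (m-1)(q+w)$. A direct computation gives $\delta(M) = \pm u^{m-1}$, a unit, so Theorem \ref{ratrecog}(b) produces the $\rdA_{M-1}$ singularity.

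For the class of $C$, I would explicitly perform the first two iterations of Lemma \ref{induct} on the ideal $(x,y)$. The initial linear change together with the step-$1$ substitution $Y' = Y + f_1 = Y - u z^{q+w}$ sends $(x,y)$ to $(x, Y' + u z^{q+w})$. Formula (\ref{g1}) with our $f_1$ and $g_j$ then gives
$$G_1 = (m-2)\,u^{m-2}\,z^{(m-3)(q+w) + w} + (\text{higher-order terms in } z),$$
whose leading exponent is exactly $M - (q+w)$ with unit coefficient (using $m \ge 3$ in characteristic zero). The step-$2$ substitution $X' = x + G_1$ transforms the ideal to $(X' - G_1,\, Y' + u z^{q+w})$, and its $X'$-generator has the form $X' - (\text{unit})\,z^{M-(q+w)}$ modulo $\fm^{M-(q+w)+1}$. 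By Lemma \ref{induct}(c),(d), every subsequent iteration modifies $X'$ and $Y'$ only by $z$-corrections of strictly higher order, so in the $\fm$-adic limit the ideal stabilizes at $(X_\infty - \zeta_1 z^{M-(q+w)},\, Y_\infty - \zeta_2 z^{q+w})$ for units $\zeta_i$, in coordinates where $F = X_\infty Y_\infty - (\text{unit})\,z^M$. Example \ref{image}(c) then identifies the class of $C$ as $q+w \in \Z/M\Z \cong \Cl \widehat \O_{S,p}$.

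The main obstacle is the $\fm$-adic convergence in the ideal tracking: verifying that, throughout the infinite sequence of iterations, the leading $z$-exponents of the accumulated $x$- and $Y$-corrections remain $M-(q+w)$ and $q+w$ respectively. This follows by induction from the strict monotonicity of the $m$-value given by Lemma \ref{induct}(c), combined with the lower bound of Lemma \ref{induct}(d), but requires careful bookkeeping analogous to that in the proof of Proposition \ref{2birds}.
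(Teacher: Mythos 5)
Your determination of the singularity type follows the paper's own route: after $Y=y+a_1x$ you read off $f_1=-uz^{q+w}$ and $g_{m-2}=uz^w$, get $\mu=(m-2)(q+w)+w$, and check $\delta(\mu)=\pm u^{m-1}\neq 0$, so Theorem \ref{ratrecog} gives the $\rdA_{\mu-1}$ singularity (your explicit $\delta(\mu)$ even shows no genericity of the units is needed for this part, slightly sharpening the paper's ``for general choice of units''). Where you genuinely diverge is the class of $C$. The paper abandons coordinate changes at that point and instead resolves the singularity by successive blow-ups, checking that the strict transform of $C$ passes through the unique singular point of each intermediate surface until the $(q+w)^{\rm th}$ blow-up, where it meets the exceptional locus away from the new singular point; hence $C$ meets $E_{q+w}$ and has class $q+w$. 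You instead push the ideal $(x,y)$ through the infinite sequence of changes of Lemma \ref{induct} and invoke Example \ref{image}(c), i.e.\ the same technique the paper uses in Propositions \ref{2birds} and \ref{picloc1}--\ref{picloc2c}, and you correctly isolate the one issue this raises: the algorithm does not terminate here, so one must show the accumulated corrections to $x$ and to $y$ have stable leading terms. That does follow as you indicate: the first correction to $y$ is $f_1$ of order exactly $q+w$, the first correction to $x$ is $G_1$ with leading term $(m-2)u^{m-2}z^{M-(q+w)}$ (a unit coefficient in characteristic zero), and the proof of Theorem \ref{ratrecog} already establishes that each later correction to a given variable has strictly larger order, so $I_C=(X_\infty-\zeta_1 z^{M-(q+w)},\,Y_\infty-\zeta_2 z^{q+w})$ with $\zeta_i$ units and Example \ref{image}(c) applies. (In fact you could skip the explicit leading term of $G_1$: once the $y$-correction is known to have order exactly $q+w$, the condition that $C$ lie on $X_\infty Y_\infty=(\text{unit})z^{M}$ forces the $x$-correction to have order $M-(q+w)$.) The trade-off is clear: the blow-up argument buys termination after $q+w$ steps at the cost of recomputing local equations at each stage, while your argument stays entirely within the Section 2 machinery but needs the convergence bookkeeping; both yield the class $q+w$ up to the usual $r\leftrightarrow M-r$ ambiguity in orienting the exceptional chain.
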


\begin{proof}
The very general surface $S$ of sufficiently high degree containing $Z$ has equation 
\[ 
xy + ax^2 + bxy^2 + cy^m + dxyz^q - dy^{m-1} -uxz^{q+w} + uy^{m-2}z^w 
\] 
for units $a,b,c,d,u$ in the local ring at the origin $p$ by assumption. 

We first apply the preparation steps in the first part of subsection~\ref{singtype} to bring the equation into a form recognizable by Theorem~\ref{ratrecog}, beginning with changing variables to $Y=y+ax$. Expanding and gathering $xY$-terms brings the equation to the form
$$\text{unit} \cdot xY + u_1 x^3 + u_2 Y^m + u_3 x^m + u_4 x^2z^q + u_5 Y^{m-1} + u_6 x^{m-1} -uxz^{q+w} + uY^{m-2} z^w + u_7 x^{m-2} z^w$$

with units $u_1 = a^2 b, u_2 = c, u_3 = c (-a)^m, u_4 = -ad, u_5 = -d, u_6 = -d (-a)^m$ and $u_7 = u (-a)^{m-2}$. Applying Theorem~\ref{ratrecog} we have $r_1 = q+w$, $\mu = (m-2)(q+w) + w$, and 
$\delta( (m-2)(q+w) + w)\neq 0$ for general choice of units, so the singularity type is 
$\rdA_{ (m-2)(q+w) + w-1}$.

To determine the class of $C$ in the completed local ring, we will look at the resolution of the singularity and determine which exceptional curve meets the strict transform of $C$. On the patch 
$Z=1$ on the first blowup, the singularities must lie on the exceptional 
locus $z=0$. This gives the equation (recycling the symbols $x$ and $y$) $xy+ax^2=0$; 
partials similarly give $x=0$ and $y+2ax=0$, so the blown-up surface is singular 
only at the origin on this patch. On $X=1$ the exceptional locus has 
equations $x=0, y=0$, which is smooth, and similarly on the other patch. 
This situation persists until we get to the $(q+w)^{\rm th}$ blow-up, 
which on the patch $Z=1$ has equation 
$$xy + ax^2 - ux + bxy^2z^{q+w} + dxyz^q + dy^{m-1}z^{(q+w)(m-3)} + uy^{m-2}z^{(q+w)(m-4)+w}.$$ 
This surface is smooth at the origin and singular at $(0,u,0)$. 
Changing variables to $y'=y+u$ produces an equation of the form
$$xy' + \text{ (terms of order at least $2$ in $x$ and $y'$ times powers of $z$)} + \text{ unit }\cdot z^{(q+w)(m-4)+w}.$$
As in subsection~\ref{singtype}, this becomes $XY - Z^{(q+w)(m-4)+w}$ where 
the variable changes to obtain $X$ and $Y$ do not affect $z$ and then $Z$ is 
a unit times $z$.

To determine the class of $C$, note that its strict transform passes through the 
origin all the way to the $(q+w)^{\rm th}$ blowup, at which point it still passes 
through the origin but misses the singular point. This gives $C$ the class $q+w$ 
in the complete local Picard group $\Z/((q+w)(m-2)+w)\Z$. As $C$ generates the class group of the original singular point, the order of this group depends on the greatest common divisor of $q+w$ and $(q+w)(m-2)+w$.

\end{proof}

\begin{prop}\label{case2b}
For $p \in Z$ as in Prop. \ref{structure}(b) with $C = \supp Z$, 
let $S$ be the general surface containing $Z$. 
Then locally the equation of $S$ at $p$ has the form
\[
F = a x^2 + b x y^2 + (fxy - (z^q x - y^{m-2}))
\]
for local parameters $x,y,z$ general units $a,b \in \O_{\Pthree,p}$ 
and $f = u z^w$ for some $w > 0$ (interpret $w = \infty$ as $f=0$) and 
$C$ has order $m-2$ in $\Cl \widehat \O_{S,p}$. 
Furthermore 
\begin{enumerate} 
\item If $m=4$, then $S$ has an $\rdA_{2q-1}$ singularity at $p$ and 
$C \mapsto q \in \Z / 2q \Z \cong \Cl \widehat \O_{S,p}$.  

\item If $m > 4, q = 1$, then $S$ has an $\rdA_{m-3}$ singularity at $p$
and $C \mapsto 1 \in \Z/ (m-2) \Z$.

\item\label{fivetwo} If $m=5, q=2$, then $S$ has an $\rdE_6$ singularity at
$p$ and $C \mapsto 1 \in \Z/ 3\Z \cong \Cl \widehat \O_{S,p}$.

\item\label{generalcase} For $m=5$ and $q\ge 3$ or $m\ge 6$ and $q\ge 2$,
the singularity 
of $S$ at $p$ is not a rational double point.

\end{enumerate}
\end{prop}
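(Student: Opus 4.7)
Begin by deriving the general local equation of $S$: taking a generic $\O_{\Pthree,p}$-combination of the three generators of $I_Z$ from Prop.~\ref{structure}(b) and absorbing the (unit) coefficient of the third into a rescaling gives
$$F = ax^2 + bxy^2 + (fxy - z^q x + y^{m-2})$$
with $a,b$ general units. To see that $(m-2)[C] = 0$ in $\Cl \widehat \O_{S,p}$, observe that $y^{m-2}$ appears in $F$ with unit coefficient, so rearranging $F = 0$ shows $y^{m-2} \in (x) \cdot \widehat\O_{S,p}$. Thus the local ideal of $Z_{m-2}$ on $S$, namely $(x, y^{m-2})$, equals the principal ideal $(x)$, so $Z_{m-2}$ is Cartier on $S$ near $p$ and its cycle $(m-2)C$ has trivial class. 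The exact order $m-2$ will drop out of the class computations in cases (1)--(3).

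For cases (1) and (2) I would reduce to Theorem~\ref{ratrecog} after preparing the equation. In case (1), $m = 4$: if $q=1$ the quadratic part $ax^2 + y^2 - zx$ has rank $3$ so $F$ defines an $\rdA_1$ at once; if $q \geq 2$ the quadratic part $ax^2 + y^2$ factors as $XY$ after extracting $\sqrt{-a}$ via Lemma~\ref{comproots}, and propagating $-z^q x$ and $bxy^2$ through the substitution makes Theorem~\ref{ratrecog} yield $\mu = 2q$ with $\delta(2q) \neq 0$ for generic $a,b$, hence $\rdA_{2q-1}$. In case (2), $m > 4$ and $q = 1$, the tangent cone $ax^2 - zx = x(ax-z)$ factors as $XY$ with $X = x$, $Y = ax - z$; here $y$ plays the role of ``$z$'' in Theorem~\ref{ratrecog} and $y^{m-2}$ supplies $h$ with $\ord h = m-2$, giving $\rdA_{m-3}$. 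In both cases the class of $C$ is computed by pushing $I_C = (x,y)$ through the coordinate changes and applying Example~\ref{image}, mimicking the final step of Prop.~\ref{picloc2c}.

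Case (3), $m=5$, $q=2$, has degenerate tangent cone $ax^2$, so the recognition theorem does not apply directly. The plan is to complete the square $x \mapsto x' = x - z^2/(2a) + \cdots$ absorbing all linear-in-$x$ terms, which brings $F$ to $a x'^2 + y^3 - z^4/(4a) + (\text{higher mixed})$; absorbing units via Lemma~\ref{comproots} and invoking finite determinacy of $\rdE_6$ normalizes this to $x^2 + y^3 + z^4$. The local class group is then $\Z/3\Z$, and since $(m-2)[C] = 3[C] = 0$ already, it remains to verify $[C] \neq 0$. This follows by tracking that $I_C = (x,y)$ remains non-principal in $\widehat\O_{S,p}$, or more concretely by a single-blow-up analysis in the style of Prop.~\ref{step2} showing the strict transform of $C$ meets a generating exceptional component.

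Case (4) is the main obstacle: we must prove the singularity is \emph{not} a rational double point. The same square-completion yields $F = a x'^2 + y^{m-2} + (\text{unit}) z^{2q} + (\text{higher})$, and the Brieskorn exponents satisfy $1/2 + 1/(m-2) + 1/(2q) \leq 1$ throughout the stated ranges (checked case by case, with $(m-2,2q)=(3,4)$ already covered by case (3)), placing the leading form $x^2 + y^{m-2} + z^{2q}$ outside the ADE list. The delicate point is to confirm that the perturbing terms $bxy^2$ and $fxy$ do not analytically reduce the singularity back into the ADE classification; this is handled either by Newton-nondegeneracy of the leading form (so that the Milnor number stabilizes above $8 = \mu(\rdE_8)$) or, more directly, by performing one explicit blow-up and exhibiting a component of the exceptional divisor that is not a $(-2)$-curve, which is incompatible with any rational double point.
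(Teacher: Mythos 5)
Your overall strategy matches the paper for the derivation of the equation and for cases (1)--(3) (where your use of Theorem~\ref{ratrecog} is a legitimate, arguably cleaner, substitute for the paper's explicit substitutions and Lemma~\ref{xyfactor}), but two points are genuine gaps. First, the claim that $C$ has order \emph{exactly} $m-2$ is asserted for all cases, including case (4); you establish only that the order divides $m-2$ and then defer the lower bound to ``the class computations in cases (1)--(3),'' which leaves case (4) --- where no class computation is performed and the class group need not even be finite --- unproved. The paper closes this uniformly: $dC$ has local ideal $(x,y^d)$, and for $d<m-2$ one checks via Nakayama that $(x,y^d)/\bigl((x,y,z)(x,y^d)+(F)\bigr)$ is $2$-dimensional because $F\in(x,y,z)(x,y^d)$, so $(x,y^d)$ is not principal on $S$.

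Second, case (4), which you rightly call the main obstacle, is not actually carried out, and both routes you sketch have problems. After completing the square the equation is $ax'^2 + y^{m-2} - L^2/(4a)$ with $L=by^2+fy-z^q$, so for $m\ge 7$ the dominant $y$-term is the $y^4$ coming from $L^2$, not $y^{m-2}$: your ``leading form'' $x^2+y^{m-2}+z^{2q}$ is not the leading form there, and worse, the degree-$4$ part of $-L^2/(4a)$ is a unit times a perfect square, so Newton-nondegeneracy of the low-order jet fails and that route collapses. The efficient observation, which your square-completion already puts in your hands for $m\ge 6$, $q\ge 2$, is simply that $F\equiv ax'^2 \pmod{\fm^4}$, which is incompatible with every rational double point by the classification (cf.\ \cite[\S 24]{lipman}); no Milnor-number or exceptional-curve analysis is needed. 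For $m=5$, $q\ge 3$ this fails (the $y^3$ survives in degree $3$), so one must actually compute a normal form: the paper obtains $X^2+Y^3+aYZ^4+bZ^6$ and rules out an RDP by blowing up once and finding an equation congruent to a square mod $\fm^4$. Your Brieskorn-exponent count does not by itself exclude $\rdE_7$ or $\rdE_8$, both of which also have the shape $x^2+y^3+(\text{order}\ge 4)$. (A smaller issue: in case (3) the $y^2z^2$ term produced by the square-completion has the same degree as $z^4$, so ``finite determinacy of $\rdE_6$'' does not dispose of it; you need the explicit cube-completion $y\mapsto y+\tfrac{\beta}{3}z^2$ as in the paper.)
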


\begin{proof} The local equation for $S$ follows immediately from Prop. \ref{structure} (b). 
To see that $C$ has order $m-2$, first observe that $(m-2)C$ is Cartier on $S$ at $p$ 
simply because 
\[
(x,F) = (x, a x^2 + b x y^2 + fxy - (z^q x - y^{m-2})) = (x,y^{m-2}).
\]
This shows that the order of $C$ divides $m-2$ and it remains to show the order cannot 
be less. For this, recall that by construction $dC$ has local ideal $(x,y^d)$ for all $d \leq m-2$, 
so we must show that $(x, y^d)$ is not Cartier on $S$ at $p$ for $d < m-2$. By Nakayama's lemma, 
this is equivalent to showing that the $\O/(x,y,z)$-vector space 
\[
\frac{(x,y^d)}{(x,y,z)(x,y^d)+(F)}
\]
has dimension $> 1$, but this is clear because $F \in (x,y,z)(x,y^d)$ for $d < m-2$, so 
the dimension is $2$. 

First assume $m=4$. Taking $e=-1$, the local equation for $S$ at $p$ is 
\[
ax^2 - y^2 - dy^3 + c y^4 + bxy^2 + dxyz^q - fxy + x z^q. 
\]
Setting $x_1 = \sqrt{a}x + y$ and $y_1=\sqrt{a}x - y$ the equation takes the form 
\[
x_1 y_1 + F + \frac{(x_1+y_1)}{2\sqrt{a}} z^q
\]
with $F \in (x_1,y_1)^2 \fm$. By Lemma \ref{xyfactor} there is a coordinate change 
$X,Y$ for which the equation becomes $XY + (AX+BY) z^q$ where $A,B$ are units for 
general choices of $a,b,c,d$. Making the elementary transformation $X_1 = X+Bz^q$ 
and $Y_1 = Y + A z^q$ brings the equation to the form $X_1 Y_1 - AB z^{2q}$ displaying 
the $\rdA_{2q-1}$ singularity. Tracing the class of the supporting curve we have 
\[
(x,y) = (x_1,y_1)=(X,Y)=(X_1-Bz^q, Y_1 - A z^q)
\]
which has class $q \in \Z / (2q) \Z \cong \Cl \widehat \O_{S,p}$ by Example \ref{image} (c). 

Now assume $m > 4$ and $q=1$. Taking $e=-1$, the local equation for $S$ at $p$ is 
\[
xz - y^{m-2} - f x y + a x^2 + b x y^2 + c y^m + d(x y z - y^{m-1})
\]
for units $a,b,c,d \in \O_{\Pthree,p}$. For $Z=z+fy+ax+by^2+dyz^q$ 
and unit $u = 1 + d y - c y^2$ this takes the form $xZ - u y^{m-2}$, 
the equation of an $\rdA_{m-3}$ singularity. The class of the curve with 
ideal $(x,y)$ is $1 \in \Z / (m-2) \Z$ by Example \ref{image} (a).

The case $m=5$ and $q=2$ has a different flavor: 
Write the equation for $S$ as 
$$x^2 + 2axy^2+by^5+2cxyz^2 - cy^4 + \ubm{2uxyz^w}{\text{or
}0}-2vxz^2+2vy^3;$$
let $x_1=x+y^2+cyz^2+uyz^w-vz^2$, and note that the expression takes on
the form 
$$x_1^2 + \alpha y^3 + \beta y^2z^2 + \gamma z^4,$$
where $\alpha, \beta, \gamma$ are units. We may assume $\alpha=1$; rewrite
this expression as 
\[
x_1^2 + \left(y + \frac{\beta}{3} z^2\right)^3 + \gamma^\prime z^4
\]
where $\gamma^\prime$ is another unit. Taking $y_1= y + \frac{\beta}{3}
z^2$ shows that $S$ has an $\rdE_6$ 
singularity at the origin with $\Cl \widehat \O_{S,p} \cong \Z/ 3\Z$. (The ideal for
the curve $C$ has 
become $(x_1-\gamma z^4,y)$.)

For the case $m=5, q\ge 3$ in part (\ref{generalcase}), a calculation
entirely analogous to the previous one gives the form $X^2+Y^3 + aYZ^4 + bZ^6,$
$a,b$ units, and the ideal of $C$ has the form $(X+i\sqrt{b}Z^3, Y)$.
The first blow-up of this surface on the patch (recycling variables as
usual) has equation
$X^2 + Y^3z + aYz^3 + bz^4$, which is not the equation for a rational
double point, since it is 
congruent to a square mod $\fm^4$ (see the classification in \cite[\S 24]{lipman}). 
Therefore the original singularity is not a rational double point, since
the resolution of a rational 
double point only involves other rational double points.

Finally, for $m\ge 6, q\ge 2$ in part (\ref{generalcase}), after the first
algebraic step of completing the square, we see that the equation for $S$ 
is congruent to a square $\mod \fm^4$, so again the singularity is not a 
rational double point.
\end{proof}

\section{Global Picard groups of normal surfaces} 

In this section we give a formula for the Picard group of very general high degree 
surfaces containing a fixed curve $Z$ with at most finitely many points of embedding 
dimension three. The solution to Problem \ref{uno} is required to apply the formula 
and we illustrate this with the examples worked out in the previous two sections. 

\begin{thm}\label{reduce}
Let $Z \subset \mathbb P^3$ be a closed one-dimensional subscheme with curve 
components $Z_1, \dots Z_r$ having respective supports $C_i$ and suppose that the set 
$F$ of points where $Z$ has embedding dimension three is finite. 
If $S$ is a very general surface of degree $d \gg 0$ containing $Z$ with plane section $H$, 
then
\begin{enumerate}
\item $S$ is normal and $\Cl S$ is freely generated by $H$ and the $C_i$. 
\item The Picard group of $S$ is  
\begin{equation}\label{picgroup}
\Pic S = \bigcap_{p \in F} \ker (\Cl S \to \Cl \O_{S,p}) \cap \langle Z_1, Z_2, \dots, Z_r,
H \rangle \subset \Cl S.
\end{equation}
\end{enumerate}
\end{thm}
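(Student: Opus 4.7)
The plan is to combine Jaffe's local-global sequence \eqref{jaffe} with the class-group computation of \cite[Thm.~1.1]{BN} and a Noether-Lefschetz restriction on $\Pic S$. Part (1) is essentially the content of \cite[Thm.~1.1]{BN}, which gives normality of $S$ and $\Cl S = \Z H \oplus \bigoplus_{i} \Z C_i$; I would quote this result directly so that only Part (2)---computing the image of $\Pic S \hookrightarrow \Cl S$---requires new work.

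For the forward inclusion in Part (2), I would first check that $\mathrm{Sing}(S) \subseteq F$. At any $p \notin F$, the scheme $Z$ has embedding dimension at most two at $p$ and is therefore locally contained in a smooth surface $T$ through $p$; restricting the subsystem of $|I_Z(d)|$ to $T$, Bertini yields that a very general $S$ of degree $d \gg 0$ is smooth at $p$. Hence Jaffe's sequence collapses to
\[
0 \to \Pic S \to \Cl S \to \bigoplus_{p \in F} \Cl \O_{S,p},
\]
giving $\Pic S \subseteq \bigcap_{p \in F} \ker (\Cl S \to \Cl \O_{S,p})$. Separately, a Noether-Lefschetz argument in the spirit of Lopez \cite{L} (refined as in \cite[Thm.~1.1]{BN} but now for the Picard rather than the class group) yields $\Pic S \subseteq \langle Z_1,\dots,Z_r,H\rangle$: for very general $S$ the only Cartier classes come from the hyperplane section $H$ and the fixed base-locus components $Z_i$, i.e., from the universal divisors carried by the family $|I_Z(d)|$.

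The reverse inclusion is straightforward. Given $D = a_0 H + \sum_i a_i Z_i$ in the claimed intersection, the kernel condition says $D$ maps to zero in each $\Cl \O_{S,p}$ for $p \in F$, so $D$ is Cartier at those points; at every $q \in S \setminus F$ the surface $S$ is smooth by the Bertini step above, so $D$ is automatically Cartier. Hence $D \in \Pic S$.

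The main obstacle is the Noether-Lefschetz step $\Pic S \subseteq \langle Z_1,\dots,Z_r,H\rangle$: one needs the monodromy/Hodge-theoretic argument used in \cite[Thm.~1.1]{BN} for $\Cl S$, carefully adapted to distinguish Cartier from Weil classes across the family in the presence of the fixed singularities at $F$. The remaining ingredients---Bertini at points of embedding dimension $\leq 2$, the Jaffe sequence, and the Cartier check in the reverse inclusion---are routine given the prior results of the paper.
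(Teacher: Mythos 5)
There is a genuine gap, and it sits exactly where the paper's actual argument does its work. Your forward inclusion rests on the claim that ${\rm Sing}\, S \subseteq F$, i.e.\ that a very general $S$ containing $Z$ is smooth at every point where $Z$ has embedding dimension $\leq 2$. This is false whenever some component $Z_i$ is non-reduced: if $Z_i = m_i C_i$ has local ideal $(x, y^{m_i})$ with $m_i > 1$, the general member of $|H^0(\I_Z(d))|$ has local equation $ax + by^{m_i}$ and acquires an $\rdA_{m_i-1}$ singularity at each of the finitely many points of $C_i$ where $a$ vanishes. These are the \emph{moving singularities} of \cite[Prop.\ 2.2]{BN}; they lie outside $F$ and vary with $S$, so Jaffe's sequence does not collapse to a sum over $F$. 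Your Bertini step only shows $S$ is smooth away from the base locus and along the reduced, embedding-dimension-two part of $Z$.

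The second problem is that you flag the containment $\Pic S \subseteq \langle Z_1,\dots,Z_r,H\rangle$ as ``the main obstacle'' requiring a new monodromy/Hodge-theoretic argument, but leave it unproved. In the paper no such argument is needed: this containment is precisely the contribution of the moving singularities you discarded. At a moving singularity $p$ on $Z_i$ the restriction $\Cl S \to \Cl \widehat\O_{S,p} \cong \Z/m_i\Z$ sends $C_i \mapsto 1$ and $C_j \mapsto 0$ for $j \neq i$, so its kernel is $\langle C_1,\dots, m_iC_i = Z_i,\dots, C_r, H\rangle$; intersecting over all $i$ with $m_i>1$ yields exactly $\langle Z_1,\dots,Z_r,H\rangle$ (and when $Z$ is reduced there are no moving singularities but also $Z_i = C_i$, so the formula is vacuously correct). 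Your reverse inclusion has the same flaw: a class $D$ in the claimed intersection must also be checked to be Cartier at the moving singularities, which again follows from $Z_i \mapsto m_i\cdot 1 = 0$ in $\Z/m_i\Z$, not from smoothness of $S$ there. So the missing idea is the dichotomy fixed versus moving singularities; once it is in place, both inclusions follow from equation (\ref{kernels}) with no Noether--Lefschetz input beyond part (1).
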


\begin{proof} Part (1) is \cite[Thm. 1.1]{BN}. It follows from sequence (\ref{jaffe}) in the introduction that 
\begin{equation}\label{kernels}
\Pic S = \bigcap_{p \in {\rm {Sing}} S} \ker(\Cl S \to \Cl \O_{S,p}).
\end{equation}
Along with the fixed singularities $F$, which forcibly 
lie on every surface $S$ containing $Z$, there are {\em moving singularities} $p$, which 
vary with the surface and lie on exactly one component $Z_i$ of multiplicity $m_i > 1$ 
\cite[Prop. 2.2]{BN}: 
these are $\rdA_{m_i-1}$ singularities and the corresponding map 
$\Cl S \to \Cl \widehat \O_{S,p} \cong \Z / m_i \Z$ sends $C_i$ to $1$ and the remaining 
$C_i$ to $0$, therefore the corresponding kernel is 
$\langle C_1, C_2, \dots, m_i C_i = Z_i, C_{i+1}, \dots, C_r, H \rangle$. The intersection of these 
subgroups for $1 \leq i \leq r$ is $\langle Z_1, Z_2, \dots, Z_r, H \rangle$ which gives 
equation (\ref{picgroup}) provided there is at least one component $Z_i$ of multiplicity $m_i > 1$. 
If there are no components of multiplicity $m_i > 1$, then $Z$ is reduced and there are no moving 
singularities: here formula (\ref{picgroup}) still works because $Z_i = C_i$ for each 
$1 \leq i \leq r$ and hence 
$\langle Z_1, \dots, Z_r, H \rangle = \langle C_1, \dots, C_r, H \rangle = \Cl S$.
\end{proof}

We first note some easy special cases. 

\begin{cor}\label{nofixed} Let $Z$ and $S$ be as in Theorem \ref{reduce}. Then 
\begin{enumerate}
\item[(a)] If $Z$ is reduced of embedding dimension at most two, then $\Pic S = \Cl S$. 
\item[(b)] If $Z$ is reduced, then $\Pic S = \bigcap_{p \in F} \ker (\Cl S \to \Cl \O_{S,p})$.
\item[(c)] If $Z$ has embedding dimension $\leq 2$, then 
$\Pic S = \langle Z_1, \dots, Z_r, H \rangle$.
\end{enumerate}
\end{cor}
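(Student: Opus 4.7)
The plan is to derive all three statements as specializations of formula (\ref{picgroup}) in Theorem \ref{reduce}, using the hypotheses of each part to collapse either the indexed intersection over $F$ or the subgroup $\langle Z_1, \dots, Z_r, H \rangle$ to something immediate.

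First I would record two elementary observations that drive the proof. \emph{Observation 1:} if $Z$ has embedding dimension at most two at every point, then by definition the fixed set $F = \emptyset$, so the indexed intersection $\bigcap_{p \in F} \ker(\Cl S \to \Cl \O_{S,p})$ is, by the usual convention for an empty intersection of subgroups, the whole group $\Cl S$. \emph{Observation 2:} if $Z$ is reduced, then every curve component $Z_i$ coincides with its underlying reduced support $C_i$, so
\[
\langle Z_1, \dots, Z_r, H \rangle = \langle C_1, \dots, C_r, H \rangle,
\]
and this is all of $\Cl S$ by Theorem \ref{reduce}(1).

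For part (a), both observations apply, so formula (\ref{picgroup}) specializes to $\Pic S = \Cl S \cap \Cl S = \Cl S$. For part (b), only Observation 2 applies and the formula becomes
\[
\Pic S = \bigcap_{p \in F} \ker(\Cl S \to \Cl \O_{S,p}) \cap \Cl S = \bigcap_{p \in F} \ker(\Cl S \to \Cl \O_{S,p}).
\]
For part (c), only Observation 1 applies and the formula becomes $\Pic S = \Cl S \cap \langle Z_1, \dots, Z_r, H \rangle = \langle Z_1, \dots, Z_r, H \rangle$.

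There is no genuine obstacle here: the corollary is a direct bookkeeping consequence of Theorem \ref{reduce}, and its purpose is to isolate the clean cases in which one of the two intersected pieces in (\ref{picgroup}) is automatically $\Cl S$. The only subtlety worth flagging explicitly is the convention that the empty intersection of subgroups of $\Cl S$ equals $\Cl S$ itself, since otherwise part (a) and the first factor in part (c) could appear ambiguous.
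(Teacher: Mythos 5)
Your proof is correct and follows essentially the same route as the paper: both reduce each part to the observations that $F=\emptyset$ makes the indexed intersection all of $\Cl S$ and that $Z$ reduced gives $Z_i = C_i$, so $\langle Z_1,\dots,Z_r,H\rangle = \Cl S$ by part (1) of Theorem \ref{reduce}. Your explicit flagging of the empty-intersection convention is a small clarity improvement over the paper's terser wording, but the argument is the same.
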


\begin{proof} (a) Here $F$ is empty and $Z_i = C_i$ for each $i$, so part (2) of the theorem 
says that $\Pic S$ is generated by $H$ and the $C_i$, which is exactly $\Cl S$ by 
part (1). (b) Here $Z_i = C_i$ again, so $\langle Z_i, H \rangle = \Cl S$. 
(c) Here again $F$ is empty. 
\end{proof}

\begin{rmk}{\em 
We see that the Picard groups aren't very interesting 
for nicely behaved curves, which explains why we have focused on non-reduced base locus $Z$. 
For example, if $Z$ has at worst nodes, then by 
Cor. \ref{nofixed} (a) the Picard group of $S$ is freely generated by $H$ and 
the components of $Z$, extending Lopez' theorem \cite[II, Thm. 3.1]{L}. 
\em}\end{rmk}

\begin{ex}\label{lines}{\em 
To see what can happen to smooth curves intersecting at a point, 
consider the simplest case when $Z = \bigcup_{i=1}^r L_i$ is the union of $r$ lines 
passing through $p$.  
\begin{enumerate}
\item [(a)] If $r=2$, then $S$ is smooth at $p$ and $\Pic S$ is freely generated by the 
two lines and $H$ by Cor. \ref{nofixed} (a). 
\item [(b)] If $3 \leq r \leq 5$ and the lines are not coplanar, then $p$ is a fixed singularity, 
but a mild one. Even when the lines are in general position with respect to containing $p$, 
$S$ has an $\rdA_1$ singularity at $p$ and the map $\Cl S \to \Cl \O_{S,p}$ takes each line 
to $1 \in \mathbb Z / 2 \mathbb Z$. Therefore the Picard group is 
\[
\Pic S = \{\sum a_i L_i + b H: 2 | \sum a_i \}
\]
in this case. We had worked out the case $r=4$ in \cite[Ex. 1.4]{BN}. 
\item[(c)] If $r > 5$ and the lines are in general position, then by \cite[Cor. 5.2]{BN2} 
$p$ is a non-rational singularity and the local class group $\O_{S,p}$ contains an 
Abelian variety. Moreover the images of the lines are involved in no relations 
in $\O_{S,p}$, so that $\Pic S = \langle H \rangle$. 
\item[(d)] There are many ways that the lines can lie in special position. 
We have not explored all of them, but we did work out the case of $r$ planar lines 
$L_1, \dots, L_r$ through $p$ union a line $L_0$ not in the plane, this configuration resembles 
a pinwheel \cite[Ex. 5.3 (b)]{BN2}. Here the point $p$ is an $\rdA_{r-1}$-singularity on $S$ and 
the map $\Cl S \to \widehat \Cl \O_{S,p} \cong \mathbb Z / r \mathbb Z$ sends $L_0$ to $1$ and 
the other lines $L_i$ to $-1$. 
Therefore $\Pic S = \langle r L_0, L_0 + L_1, L_0 + L_2, \dots L_0 + L_r, H \rangle$.
\end{enumerate}
\em}\end{ex}

Now we consider examples in which $Z$ is non-reduced, but the set 
$F$ of embedding dimension three points is non-empty.  

\begin{ex}\label{mline}{\em 
Consider the very general high degree surface $S$ containing 
a locally Cohen-Macaulay $m$-structure $Z$ of generic embedding dimension two supported on a line $L$. 
\begin{enumerate}
\item[(a)] If $Z$ has embedding dimension two at each point (always true if $m=2$), 
then $\Pic S = \cyc{H, Z}$ by Cor. \ref{nofixed} (c).
\item[(b)] If the underlying $(m-1)$ structure has embedding dimension two but $Z$ itself does not, 
then $S$ has an $\rdA_{(m-1)q-1}$-singularity at $p$ for some $q > 0$ and the restriction map 
$\Cl S \to \widehat \Cl \O_{S,p} \cong \mathbb Z / q(m-1) \mathbb Z$ sends $L$ to 
$q \in \mathbb Z / q(m-1) \mathbb Z$ by Prop. \ref{simple}. Applying Theorem \ref{reduce} (c) we have 
\[
\Pic S = \langle m L, H \rangle \cap \langle (m-1) L, H \rangle = \langle m(m-1) L, H \rangle. 
\]
For example, the very general surface $S$ containing a typical triple line $Z$ supported on $L$ has 
Picard group $\Pic S = \langle 6 L, H \rangle$. 
\item[(c)] If the underlying $(m-2)$-structure has embedding dimension two and the underlying 
$(m-1)$-structure does not, the story is more complicated because there are two possibilities 
for the local ideal of $Z$ at $p$ by Prop. \ref{structure}. 
In the second of these (see Prop. \ref{case2b} (b)) $L$ has order $m-2$ in $\Cl \O_{S,p}$, 
so Theorem \ref{reduce} gives $\Pic S = \langle m L, H \rangle \cap \langle (m-2) L, \O_S (1) \rangle = \langle LCM(m,m-2) L, H \rangle$. 
The actual singularity may be an $\rdA_n$, an $\rdE_6$ or even irrational. 
\end{enumerate}
\em}\end{ex}

\begin{ex}{\em 
In section \ref{2multcurves} Consider the very general high degree surface $S$ 
containing a union of two multiple lines $Z_1, Z_2$ supported on $L_1, L_2$. 
\begin{enumerate}
\item[(a)] If $Z_1 \cap Z_2 = \emptyset$, then $\Pic S = \langle Z_1, Z_2, H \rangle$ by Cor. \ref{nofixed} (c). 
\item[(b)] Now suppose $I_{Z_1} = (x,z^m)$ and $I_{Z_2}=(y,z^n)$ with $m \leq n$ so that 
$Z_1 \cap Z_2$ is a length $n$ subscheme supported at $p=(0,0,0,1)$. 
By Props. \ref{case1} and \ref{picloc1}, $p$ is an $\rdA_{n-1}$ singularity of $S$ and 
the restriction map $\Cl S \to \Cl \widehat \O_{S,p} \cong \mathbb Z / n \mathbb Z$ 
takes $L_1, L_2$ to $1,-1$. Taking the kernel of this map we find that 
$\Pic S = \langle n L_1, L_1 + L_2, H \rangle$. 
\item[(c)] Now replace $Z_1$ with the multiple line with having $(x^m,z)$. The support of 
$Z_1 \cup Z_2$ is the same as the last example, but now $L_2$ is contained in the plane 
$S_1: \{z=0\}$ containing $Z_1$, so $L_2$ has order of tangency $q = \infty$ to $S_1$. 
According to Prop. \ref{picloc2a}, $S$ has an $\rdA_{mn-1}$ singularity at $p$ and 
the restriction $\Cl S \to \Cl \widehat \O_{S,p} \cong \mathbb Z / mn \mathbb Z$ 
takes $L_1$ to $1$ and $L_2$ to $mn-m$. 
Therefore $\Pic S = \langle mn L_1, L_2 - (mn-m) L_1, H \rangle$.  
\end{enumerate} 
\em}\end{ex} 

\begin{ex}{\em
These results can be used in combination, so we close with an example illustrating several 
behaviors at once. Start with three non-planar lines $L_1, L_2, L_3$ meeting at $p_1$. 
Let $Z_4$ be a $4$-structure on a line $L_4$ intersecting $L_1$ at $p_2 \neq p_1$, and assume 
that $Z_4$ is contained in a smooth quadric surface $Q$ which is tangent to $L_1$. 
Let $Z_5$ be a $3$-structure on a line $L_5$ which intersects $L_2$ in a reduced point $p_3 \neq p_1$, 
and suppose that $Z_5$ has at least one point $p_4 \neq p_3$ of embedding dimension three. 
Finally, let $Z_6$ be a double line supported on $L_6$ which intersects $Z_5$ at a point 
$p_5 \neq p_4, p_3$ and assume that $L_6$ intersects a local surface $S_5$ defining 
$Z_5$ in a double point. Finally let $Z = L_1 \cup L_2 \cup L_3 \cup Z_4 \cup Z_5 \cup Z_6$ 
and consider the very general surface $S$ of high degree containing $Z$. 

By Theorem \ref{reduce} (a), $\Cl S$ is freely generated by $H$ 
and $L_1, L_2, \dots, L_6$ and to find $\Pic S$ we must compute the kernels 
of the maps $\Cl S \to \Cl \O_{S,p_i}$ for $1 \leq i \leq 5$: 
\begin{enumerate}
\item By Ex. \ref{lines} (b) the kernel at $p_1$ is 
$\langle 2 L_1, L_2-L_1, L_3-L_1, L_4, L_5, L_6, H \rangle$ and 
$S$ has an $\rdA_1$ singularity at $p_1$. 
\item By Prop. \ref{picloc2c} with $m=4, n=1, q=2$, 
the natural restriction map is given by $L_4 \mapsto 1, L_1 \mapsto 2 \in \mathbb Z / 4 \mathbb Z$, so the 
kernel at $p_2$ is $\langle L_1 - 2 L_4, L_2, L_3, 4 L_4, L_5, L_6, H \rangle$ and $S$ has 
an $\rdA_3$ singularity at $p_2$. 
\item By Prop. \ref{simple}, the kernel at $p_3$ is 
$\langle L_1, L_2 + L_3, 3 L_3, L_4, L_5, L_6, H \rangle$.
\item By Ex. \ref{mline} (b) the kernel at $p_4$ is $\langle L_1, L_2, L_3, L_4, 6 L_5, L_6, H \rangle$.
\item By Prop. \ref{picloc2c} with $n=q=2, m=3$, the kernel is 
$\langle L_1, L_2, L_3, L_4, L_6 - 2 L_5, 4 L_5, H \rangle$
\end{enumerate}
Using Hermite Normal Form and Mathematica we compute the intersection of the kernels and 
$\langle L_1, L_2, L_3, 4 L_4, 3L_5, 2L_6, H \rangle$ to be 
$\Pic S = \langle 2L_1, 6L_2, L_3+L_2, 4L_4, 12L_5, 2L_6, H \rangle$.
\em}\end{ex}

\end{document}